\DeclareMathOperator{\Ad}{Ad}
\DeclareMathOperator{\Aut}{Aut}
\DeclareMathOperator{\tr}{tr}
\DeclareMathOperator{\Ric}{Ric}
\newcommand{\fr}{\mathfrak}
\newcommand{\al}{\alpha}
\newcommand{\bb}{\mathbb}
\DeclareMathOperator{\SO}{SO}
\DeclareMathOperator{\SU}{SU}
\DeclareMathOperator{\U}{U}
 \newtheorem{lemma} {Lemma} [section]
\newtheorem{theorem}[lemma]{Theorem} 
\newtheorem{remark}[lemma] {Remark} 
\newtheorem{prop} [lemma]{Proposition}
\newcommand{\thickhline}{%
    \noalign {\ifnum 0=`}\fi \hrule height 1pt
    \futurelet \reserved@a \@xhline
}
\newcolumntype{"}{@{\hskip\tabcolsep\vrule width 1pt\hskip\tabcolsep}}
\begin{document}

\title{Homogeneous Einstein metrics on  Stiefel manifolds associated to flag manifolds with two isotropy summands} 
\author{Andreas Arvanitoyeorgos, Yusuke Sakane and Marina Statha}
\address{University of Patras, Department of Mathematics, GR-26500 Rion, Greece}
\email{arvanito@math.upatras.gr}
 \address{Osaka University, Department of Pure and Applied Mathematics, Graduate School of Information Science and Technology, Suita, Osaka 565-0871, Japan}
 \email{sakane@math.sci.osaka-u.ac.jp}
\address{University of Patras, Department of Mathematics, GR-26500 Rion, Greece}
\email{statha@master.math.upatras.gr} 

\medskip
 
\begin{abstract}
We study invariant Einstein metrics on the Stiefel manifold $V_{k}\bb{R}^{n}\cong \SO(n)/\SO(n-k)$ of all orthonormal $k$-frames in $\bb{R}^{n}$.  The isotropy representation of this homogeneous space contains equivalent summands, so a complete description of $G$-invariant metrics is not easy.  
In this paper we view the manifold $V_{2p}\bb{R}^{n}$ as total space over a classical generalized flag manifolds with two isotropy summands and prove for $2\le p\le \frac25 n-1$ it admits at least four invariant Einstein metrics determined by $\Ad(\U(p)\times\SO(n-2p))$-invariant scalar products.  Two of the metrics are Jensen's metrics and the other two are new Einstein metrics. 
The Einstein equation reduces to a  parametric system of polynomial equations, which we study  by
combining Gr\"obner bases and geometrical arguments.

\medskip
\noindent 2010 {\it Mathematics Subject Classification.} Primary 53C25; Secondary 53C30, 13P10, 65H10, 68W30.

\medskip
\noindent {\it Keywords}: Homogeneous space, Einstein metric, Stiefel manifold, genaralized flag manifold, isotropy representation, Gr\"obner basis.
\end{abstract}

\maketitle

\section{Introduction}
\markboth{A. Arvanitoyeorgos, Y. Sakane and M. Statha}{Homogeneous Einstein metrics on real Stiefel manifolds}

A Riemannian metric $g$ on a manifold $M$ is called Einstein if the Ricci tensor is a constant multiple of the metric, i.e. $\Ric_g = c\cdot g$ for some $c\in\bb{R}$.  In this case the Riemannian manifold $(M, g)$ is called Einstein.  The Einstein equation is a non linear second order system of partial differential equations and the general existence results are difficult to obtain.  We refer to \cite{Be} and \cite{W1}, \cite{W2} for old and new results on homogeneous Einstein manifolds.  The structure of the set of invariant Einstein metrics on a given homogeneous space is still not very well understood in general.  The situation is only clear for few classes of homogeneous spaces, such as isotropy irreducible homogeneous spaces, low dimensional examples, certain flag manifolds.  For an arbitrary compact homogeneous space $G/H$ it is not clear if the set of invariant Einstein metrics (up to isometry and up to scaling) is finite or not.  A finiteness conjecture states that this set is in fact finite if the isotropy representation of $G/H$ consists of pairwise inequivalent irreducible subrepresentations (\cite{BWZ}).  
 In \cite{ADN1} the authors introduced a method for proving existence of homogeneous Einstein metrics by assuming additional symmetries.  In \cite{St} a systematic and organized description of such metrics is presented.

In the present article we are interested for invariant Einstein metrics on homogeneous spaces $G/H$ whose isotropy representation is decomposed into a sum of irreducible but possibly equivalent summands.  Typical examples of such homogeneous spaces are the (real) Stiefel manifolds.
These are the homogeneous spaces $V_{k}\bb{R}^{n}\cong\SO(n)/\SO(n-k)$, which are diffeomorphic to the set of orthonormal $k$-frames in $\bb{R}^n$.  The simplest cases are the sphere $\bb{S}^{n-1}\cong\SO(n)/\SO(n-1)$ and the compact Lie group $V_{n}\bb{R}^{n}\cong \SO(n)$. 

Since a complete description of $G$-invariant metrics on such spaces (and in turn computation of Ricci tensor) is not easy, a helpful approach is the following (cf \cite{ADN1}).
We can choose a closed subgroup $K$ of $G$ such that $H\subset K\subset N_{G}(H)$ and search for $\Ad(K)$-invariant scalar products on $T_o(G/H)$, which correspond to  a subset of all
$G$-invariant metrics on $G/H$, called $\Ad(K)$-invariant metrics.  The benefit is that these metrics could be diagonal, hence search for Einstein metrics among them.
To this end we can consider appropriate fibrations of $G/H$ over some  homogeneous space $G/K$ whose isotropy representation is decomposed into non equivalent summands.

 Concerning history, in \cite{Ko} S. Kobayashi proved existence of an invariant Einstein metric on the unit tangent bundle $T_1S^n = \SO(n)/\SO(n-2)$.  In \cite{Sa} A. Sagle proved that the Stiefel manifolds $V_k\mathbb{R} ^n=\SO(n)/\SO(n-k)$ admit at least one homogeneous Einstein metric.  For $k\ge 3$ G. Jensen in \cite{J2} found a second metric.  If $n=3$ the Lie group $\SO(3)$ admits a unique Einstein metric.  For $n\ge 5$  A. Back and W.Y. Hsiang in \cite{BH} proved that $\SO(n)/\SO(n-2)$ admits exactly one homogeneous Einstein metric. The same result was obtained by M. Kerr in \cite{Ke} by proving that the diagonal metrics are the only invariant metrics on $V_2\mathbb{R}^n$ (see also \cite{A1}, \cite{AK}).  The Stiefel manifold $\SO(4)/\SO(2)$ admits exactly two invariant Einstein metrics (\cite{ADF}).  One is Jensen's metric and the other one is the product metric on $S^3\times S^2$.  

In \cite{ADN1} the first author, V.V. Dzhepko and Yu. G. Nikonorov proved that
for $s>1$ and $\ell >k\ge 3$ the Stiefel manifold $\SO(s k+\ell)/\SO(\ell)$ admits at least four $\SO(s k+\ell)\times (\SO(k))^s$-invariant Einstein metrics, two of which are Jensen's metrics.  

In \cite{ArSaSt1} the authors used fibrations of Stiefel manifolds over the generalized Wallach spaces $\SO(k_1+k_2+k_3)/\SO(k_3)$ and
proved the following:
 The Stiefel manifolds
$V_4\bb{R}^n=\SO(n)/\SO(n-4)$ $(n\ge 6)$ admit at least four invariant Einstein metrics.  Two are Jensen's metrics and the other two are new metrics.
The Stiefel manifold
$V_5\bb{R}^7=\SO(7)/\SO(2)$ admits at least six invariant Einstein metrics.  Two are Jensen's metrics and the other four are new metrics.
A generalization of this result for $V_5\bb{R}^n=\SO(n)/\SO(n-5)$ $(n\ge 7$) was given by the first author  in \cite{A2}.

For an overview about invariant Einstein metrics on quaternionic and complex Stiefel manifolds we refer to \cite{ArSaSt3} and 
\cite{ArSaSt2} respectively.

In the present paper we obtain new invariant Einstein metrics on the Stiefel manifolds (A) $V_{2 p}\bb{R}^{2\ell}$ and (B) $V_{2 p}\bb{R}^{2\ell+1}$.  In case (A) we view the Stiefel manifold as total space over the generalized flag manifold $D_{\ell}(p) = \SO(2\ell)/\U(p)\times\SO(2(\ell-p))$  and for the case (B) we take the generalized flag manifold $B_{\ell}(p) = \SO(2\ell + 1)/\U(p)\times\SO(2(\ell-p)+1)$.  The main result is the following:

\noindent{\bf Theorem.}
For $ \displaystyle 2 \leq p \leq \frac{2}{5} n-1$,   the Stiefel manifold $V_{2 p}\bb{R}^{n} = \SO(n)/\SO(n-2 p)$ admits at least four invariant Einstein metrics which are determined by  $\Ad(\U(p)\times\SO(n - 2 p) )$-invariant scalar products. Two  of these metrics are Jensen's metrics and the other two are different from Jensen's metrics.

The  idea of our approach is the following. 
The isotropy representation of the Stiefel manifolds $G/H=\SO(n)/\SO(n-2 p)$ contains equivalent subrepresentations, hence a description of all 
$\SO(n)$-invariant metrics is not easy (cf. Section 3).  These metrics correspond to $\Ad(H)$-invariant scalar products on the tangent space $\mathfrak{m}$ of $G/H$ at the identity coset $eH$.  We choose an appropriate
 closed subgroup $K$ of $G$ such that $H \subset K\subset N_G(H)$, the normalizer of $H$ in $G$, and search for 
$\SO(n)$-invariant metrics that correspond to 
 $\Ad(K)$-invariant scalar
products on $\mathfrak{m}$ (cf. Section 2).  The benefit of such metrics is that they are diagonal metrics
on the homogeneous $G/H$, hence computation of Ricci tensor simplifies (cf. Section 3).

Next, it is well known that the Einstein equation for a homogeneous space reduces to an algebraic system of equations.  In our case we obtain
 a parametric system of three polynomial equations
with three unknowns $u_0, u_1, u_2$ (cf.\,(\ref{system4})), and we need to prove existence of positive solutions.  Furthermore, we need to distinguish the solutions detected, from the solutions that correspond to Jensen's Einstein metrics.
In fact, these known solutions confirm that out computations are right.
The proof of this algebraic claim is based on an analysis of the parametric system of
algebraic equations based on computations with Gr\"obner bases, by making appropriate choice of lexicographic ordering for the variables (cf. Section 4).

\medskip
\noindent
{\bf Acknowledgement.} 
The second author was supported by JSPS KAKENHI Grant Number 16K05130.

\section{A special class of $G$-invariant metrics on $G/H$ and the Ricci tensor}

Let $G$ be a compact Lie group and $H$ a closed subgroup so that $G$ acts transitively on the homogeneous space $G/H$.  Then the homogeneous space $G/H$ is reductive, because we can take $\fr{m} = \fr{g}^{\perp}$ where $\Ad(H)\fr{m}\subset \fr{m}$ with respect to an $\Ad$-invariant scalar product on $\fr{g}$.  So the Lie algebra $\fr{g}$ can be written as $\fr{g} = \fr{h}\oplus\fr{m}$.  The tangent space of $G/H$ at the ${\it o} = eH\in G/H$ is canonically identified with $\fr{m}$.  For  a compact semisimple Lie group $G$ the negative of the Killing form $B$ of $\fr{g}$ is an $\Ad(G)$-invariant scalar product, therefore we can choose the above decomposition with respect to this form.  A Riemannian metric $g$ on $G/H$ is called $G$-invariant if the diffeomorphism $\tau_{\al} : G/H \to G/H,$ $\tau_{\al}(g H) = \al g H$ is an isometry.  We denote by $\mathcal{M}^{G}$ the set of all $G$-invariant metrics.  Any such a metric is to one-to-one correspondence with an $\Ad(H)$-invariant scalar product $\langle\cdot,\cdot\rangle$ on $\fr{m}$ and is considered as a point of fixed points $(\mathcal{M}^{G})^{\Phi_{H}}$ of the action $\Phi_{H}$ $=\{\Ad(h)|_{\fr{m}} : h\in H\}$ $\subset \Phi = \{\Ad(n)|_{\fr{m}} :  n \in N_{G}(H)\}$ $\subset$ $\Aut(\fr{m})$ on $\mathcal{M}^{G}$.  In the special case where $H = \{e\}$ then $N_{G}(H) = G$, thus the fixed points $(\mathcal{M}^{G})^{\Phi}$ are the $\Ad(G)$-invariant scalar products on $\fr{g}$.  These correspond to the bi-invariant metrics on the Lie group $G$. 

The isotropy representation $\chi : H \to \Aut(\fr{m})$ of the reductive homogeneous space $G/H$ coincides with the restriction of the adjoint representation of $H$ on $\fr{m}$.  We assume that $\chi$ decompose into a direct sum of irreducible subrepresentation $\chi \cong \chi_{1}\oplus\cdots\oplus\chi_{s}$, so the tangent space splits into a direct sum of $\Ad(H)$-invariant subspaces 
\begin{equation}\label{diaspasi}
\fr{m} = \fr{m}\oplus\cdots\oplus\fr{m}_{s}.
\end{equation}
In this case the $G$-invariant metrics are determined by the diagonal $\Ad(H)$-invariant scalar products of the form
\begin{equation}\label{diagonia}
\langle\cdot,\cdot\rangle = x_{1}(-B)|_{\fr{m}_{1}} + \cdots + x_{s}(-B)|_{\fr{m}_{s}}, 
\end{equation}
where $x_{i}\in\bb{R}^{+}$. 
If some of the subrepresentations $\chi_{i}$ are equivalent then decomposition (\ref{diaspasi}) is not unique.  Hence the $\Ad(H)$-invariant scalar product is not necessary diagonal.  For this case we can choose a closed subgroup $K$ of $G$ such that $H\subset K\subset N_{G}(H)$ and search for $\Ad(K)$-invariant scalar products on $\fr{m}$ which correspond to the fixed points $(\mathcal{M}^{G})^{\Phi_{K}}$ which is a subset of $\mathcal{M}^{G}$, sometimes also called $\Ad(K)$-invariant metrics.  The benefit of such metrics is that they are diagonal metrics on the homogeneous space.  The next proposition gives a possible way to choose such a subgroup $K$ of $G$.

\begin{prop}\label{subset}
Let $K$  be a subgroup of $G$ with $H\subset K \subset G$ and such that $K = L\times H$, for some subgroup $L$ of $G$.  Then $K$ is contained in $N_{G}(H)$.
\end{prop}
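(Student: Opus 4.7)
The plan is to unpack the hypothesis $K = L \times H$ (an internal direct product) and show directly that every $k \in K$ normalizes $H$. Recall that writing $K$ as a direct product of its subgroups $L$ and $H$ means two things: first, $K = LH = \{\ell h : \ell \in L,\, h \in H\}$ with unique factorization, and second, elements of $L$ commute with elements of $H$, i.e.\ $\ell h = h \ell$ for every $\ell \in L$ and $h \in H$.

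First I would fix an arbitrary $k \in K$ and write it as $k = \ell h$ with $\ell \in L$ and $h \in H$. Then for any $h' \in H$, I would compute
\begin{equation*}
k h' k^{-1} = \ell h \, h' \, h^{-1} \ell^{-1} = \ell \,(h h' h^{-1})\, \ell^{-1}.
\end{equation*}
The element $h h' h^{-1}$ lies in $H$ because $H$ is a subgroup, and since $L$ and $H$ commute elementwise, conjugation by $\ell$ fixes every element of $H$, so
\begin{equation*}
\ell \,(h h' h^{-1})\, \ell^{-1} = h h' h^{-1} \in H.
\end{equation*}
This shows $k H k^{-1} \subseteq H$. Applying the same argument to $k^{-1} = h^{-1} \ell^{-1}$ (which also lies in $K = L \times H$) yields $k^{-1} H k \subseteq H$, hence $k H k^{-1} = H$ and $k \in N_G(H)$.

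There is essentially no obstacle here; the proof is a direct consequence of the definition of an internal direct product. The only small care needed is to justify that in $K = L \times H$ the factors commute elementwise, which is built into the notion of a direct product (as opposed to a mere semidirect product), and to note that $H$ is automatically normal in $K$ under this hypothesis, which is a slightly stronger statement than what is required.
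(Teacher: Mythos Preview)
Your argument is correct: unpacking the internal direct product $K = L\times H$ to see that $L$ and $H$ commute elementwise immediately gives $kHk^{-1}=H$ for every $k\in K$. The paper in fact states this proposition without proof, so there is nothing to compare against; your short verification is exactly what is needed.
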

 
Now we describe the Ricci tensor for the diagonal metrics of the form (\ref{diagonia}).  Every $G$-invariant symmetric covariant 2-tensor on $G/H$ are of the same form as the Riemannian metrics (although they are not necessarly positive definite).  In particular, the Ricci tensor $r$ of a $G$-invariant Riemannian metric on $G/H$ is of the same form (\ref{diagonia}), that is
$$
r = r_1 x_1(-B)|_{\fr{m}_1} + \cdots +r_{s} x_{s}(-B)|_{\fr{m}_s}.
$$
Let $\lbrace e_{\alpha} \rbrace$ be a $(-B)$-orthonormal basis adapted to the decomposition of $\frak m$, i.e. $e_{\alpha} \in {\frak m}_i$ for some $i$, and $\alpha < \beta$ if $i<j$.  We put ${A^\gamma_{\alpha \beta}}= -B\left(\left[e_{\alpha},e_{\beta}\right],e_{\gamma}\right)$ so that $\left[e_{\alpha},e_{\beta}\right]
= \displaystyle{\sum_{\gamma} A^\gamma_{\alpha \beta} e_{\gamma}}$ and set $A_{ijk}:=\displaystyle{k \brack {ij}}=\sum (A^\gamma_{\alpha \beta})^2$, where the sum is taken over all indices $\alpha, \beta, \gamma$ with $e_\alpha \in {\frak m}_i,\ e_\beta \in {\frak m}_j,\ e_\gamma \in {\frak m}_k$ (cf. \cite{WZ}).  Then the positive numbers $A_{ijk}$ are independent of the 
$(-B)$-orthonormal bases chosen for ${\frak m}_i, {\frak m}_j, {\frak m}_k$, and 
$A_{ijk} = A_{jik} = A_{kij}$.  Let $ d_k= \dim{\frak m}_{k}$. Then we have the following:

\begin{lemma}\label{ric2}\textnormal{(\cite{PS})}
The components ${ r}_{1}, \dots, {r}_{s}$ of the Ricci tensor ${r}$ of the metric $ \langle\cdot, \cdot \rangle$ of the
form {\em (\ref{diagonia})} on $G/H$ are given by 
\begin{equation}
{r}_k = \frac{1}{2x_k}+\frac{1}{4d_k}\sum_{j,i}
\frac{x_k}{x_j x_i} A_{jik}
-\frac{1}{2d_k}\sum_{j,i}\frac{x_j}{x_k x_i} A_{kij}
 \quad (k= 1,\ \dots, s),    \label{eq51}
\end{equation}
where the sum is taken over $i, j =1,\dots, s$.
\end{lemma}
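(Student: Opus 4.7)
My plan is to derive (\ref{eq51}) by specialising a standard expression for the Ricci tensor of a $G$-invariant metric on a compact reductive homogeneous space to the decomposition (\ref{diaspasi}) and the diagonal metric (\ref{diagonia}). The starting point is the well known formula (see \cite{Be}): for any $\langle\cdot,\cdot\rangle$-orthonormal basis $\{E_\alpha\}$ of $\fr m$ and any $X\in\fr m$,
\begin{equation*}
r(X,X)=-\tfrac12\sum_{\alpha}\bigl\langle[X,E_\alpha]_{\fr m},[X,E_\alpha]_{\fr m}\bigr\rangle-\tfrac12 B(X,X)+\tfrac14\sum_{\alpha,\beta}\bigl\langle[E_\alpha,E_\beta]_{\fr m},X\bigr\rangle^{2},
\end{equation*}
the ``mean curvature'' term being absent because compactness of $G$ forces $\tr\ad(X)|_{\fr m}=0$ for every $X\in\fr m$.

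Next I convert to the $(-B)$-orthonormal basis $\{e_\alpha\}$ adapted to (\ref{diaspasi}): for $e_\alpha\in\fr m_i$, the rescaled vector $E_\alpha:=e_\alpha/\sqrt{x_i}$ is $\langle\cdot,\cdot\rangle$-orthonormal. The $\ad$-invariance of $B$ gives the cyclic symmetry $A^\gamma_{\alpha\beta}=A^\alpha_{\beta\gamma}$, which together with $A^\gamma_{\alpha\beta}=-A^\gamma_{\beta\alpha}$ makes $(A^\gamma_{\alpha\beta})^{2}$ invariant under every permutation of $(\alpha,\beta,\gamma)$, and hence $A_{ijk}$ totally symmetric. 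Because each $\chi_i$ is irreducible and $r$ is $\Ad(H)$-invariant, Schur's lemma on each $\fr m_k$ forces $r|_{\fr m_k\times\fr m_k}$ to be a scalar multiple of $(-B)|_{\fr m_k}$; in accordance with (\ref{diagonia}) the scalar is written as $r_k x_k$, so $r(e_\alpha,e_\alpha)=r_k x_k$ for every $e_\alpha\in\fr m_k$ and therefore
\begin{equation*}
r_k=\frac{1}{d_k x_k}\sum_{e_\alpha\in\fr m_k}r(e_\alpha,e_\alpha).
\end{equation*}

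Substituting $X=e_\alpha$ into the Ricci formula, replacing each $E_\beta$ by $e_\beta/\sqrt{x_j}$ when $e_\beta\in\fr m_j$, and expanding $[e_\alpha,e_\beta]=\sum_\gamma A^\gamma_{\alpha\beta}e_\gamma$ converts each of the three summands into expressions in the $A^\gamma_{\alpha\beta}$ and the $x_i$. The Killing form piece sums to $d_k/2$ and yields $1/(2x_k)$ after division by $d_k x_k$. Grouping the bracket-square summand by the block labels $i,j$ of $e_\beta,e_\gamma$, collecting $(A^\gamma_{\alpha\beta})^{2}$ into $A_{ijk}$ and dividing by $d_k x_k$ produces $\tfrac{1}{4d_k}\sum_{i,j}\frac{x_k}{x_i x_j}A_{ijk}$. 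The same procedure on the first summand, followed by a relabelling of dummy indices exploiting the total symmetry of $A_{ijk}$, gives $-\tfrac{1}{2d_k}\sum_{i,j}\frac{x_j}{x_k x_i}A_{kij}$, and the three pieces together reproduce (\ref{eq51}).

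Nothing in the argument is conceptually deep; the genuine difficulty is bookkeeping — tracking the rescaling factors $\sqrt{x_i}$ carefully as one passes between the two orthonormal bases, keeping straight the correspondence between basis indices $\alpha,\beta,\gamma$ and their block labels $k,j,i$, and exploiting the full symmetry of $A_{ijk}$ to bring the three contributions into the precise form displayed in (\ref{eq51}). Proposition~\ref{subset} plays no role here; it motivates the choice of an enlarged subgroup $K$ that forces the metrics of interest to be diagonal, but the present formula depends only on (\ref{diaspasi}) and (\ref{diagonia}).
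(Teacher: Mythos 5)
Your derivation is correct, and it is essentially the standard argument: the paper itself offers no proof of this lemma but simply cites \cite{PS}, where the formula is obtained exactly as you do, by specialising Besse's Ricci formula for a compact (unimodular) homogeneous space to the $(-B)$-orthonormal basis adapted to (\ref{diaspasi}), rescaling by $\sqrt{x_i}$, and collecting the structure constants into the symmetric quantities $A_{ijk}$. The only minor caveats — that the vanishing of the mean-curvature term should be attributed to unimodularity of $G$, and that on a real irreducible orthogonal module the invariant \emph{symmetric} bilinear forms are one‑dimensional (so Schur's lemma applies as you use it) — are easily supplied and do not affect the argument.
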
 
Since by assumption the submodules $\fr{m}_{i}, \fr{m}_{j}$ in the decomposition (\ref{diaspasi}) are mutually non equivalent for any $i\neq j$, it is $r(\fr{m}_{i}, \fr{m}_{j})=0$ whenever $i\neq j$.  Thus by Lemma \ref{ric2} it follows that $G$-invariant Einstein metrics on $M=G/H$ are exactly the positive real solutions $g=(x_1, \ldots, x_s)\in\bb{R}^{s}_{+}$  of the  polynomial system $\{r_1=\lambda, \ r_2=\lambda,  \ldots,  r_{s}=\lambda\}$, where $\lambda\in \bb{R}_{+}$ is the Einstein constant.

\section{The Stiefel manifolds $V_k\bb{R}^{n}\cong\SO(n)/\SO(n-k)$}

We embed the group $\SO(n-k)$ in $\SO(n)$ as $\begin{pmatrix}
1_k & 0\\
0 & C
\end{pmatrix}$ where $C\in\SO(n-k)$.  The Killing form of $\fr{so}(n)$ is $B(X, Y)=(n-2)\tr XY$.
Then with respect to $-B$ the subspace $\fr{m}$ may be identified with the set 
of matrices of the form
$$
\begin{pmatrix}
D_k & A\\
-A^t & 0_{n-k}
\end{pmatrix},
$$
in $\fr{so}(n)$, where $D_k \in \fr{so}(k)$ 
 and $A$ is an $k\times (n-k)$ real matrix.  Let $E_{ab}$ denote the $n\times n$ matrix with $1$ at the $(ab)$-entry and $0$ elsewhere.  Then the set $\mathcal{B}=\{e_{ab}=E_{ab}-E_{ba}: 1\le a\le k,\ 1\le a<b\le n\}$
constitutes a $(-B)$-orthogonal basis of $\fr{m}$.  Note that $e_{ba}=-e_{ab}$, thus we have the following:

\begin{lemma}\label{brac}
If all four indices are distinct, then the Lie brakets in $\mathcal{B}$ are zero.
Otherwise,
$[e_{ab}, e_{bc}]=e_{ac}$, where $a,b,c$ are distinct.
\end{lemma}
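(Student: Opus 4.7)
The plan is a direct computation, using only the standard matrix unit commutator rule
\begin{equation*}
[E_{ab},E_{cd}] = \delta_{bc}E_{ad} - \delta_{ad}E_{cb},
\end{equation*}
which is immediate from the definition of matrix multiplication. Since each $e_{ab} = E_{ab} - E_{ba}$, everything reduces to bookkeeping.

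First I would expand
\begin{equation*}
[e_{ab},e_{cd}] = [E_{ab},E_{cd}] - [E_{ab},E_{dc}] - [E_{ba},E_{cd}] + [E_{ba},E_{dc}]
\end{equation*}
and substitute the matrix unit formula in each of the four brackets. The eight resulting matrix units pair up (using $E_{ij}-E_{ji}=e_{ij}$ and $e_{ji}=-e_{ij}$) into the compact expression
\begin{equation*}
[e_{ab},e_{cd}] = \delta_{bc}\,e_{ad} + \delta_{ad}\,e_{bc} - \delta_{bd}\,e_{ac} - \delta_{ac}\,e_{bd}.
\end{equation*}

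Both claims of the lemma then follow by inspection. If $a,b,c,d$ are pairwise distinct, every Kronecker delta vanishes and hence $[e_{ab},e_{cd}]=0$. Specializing to $[e_{ab},e_{bc}]$ with $a,b,c$ distinct, only the first term contributes, namely $\delta_{bb}\,e_{ac} = e_{ac}$, while the remaining three deltas are zero; this gives the stated formula $[e_{ab},e_{bc}] = e_{ac}$. There is no real obstacle here: the only care required is the sign bookkeeping in collecting matrix units via the antisymmetry $e_{ji}=-e_{ij}$, which is routine.
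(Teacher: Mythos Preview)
Your argument is correct: the matrix-unit commutator formula, the expansion into four brackets, and the resulting identity
\[
[e_{ab},e_{cd}] = \delta_{bc}\,e_{ad} + \delta_{ad}\,e_{bc} - \delta_{bd}\,e_{ac} - \delta_{ac}\,e_{bd}
\]
are all right, and both assertions of the lemma follow by inspection. The paper itself states this lemma without proof, treating it as an elementary fact about $\fr{so}(n)$, so there is no alternative approach to compare; your direct computation is exactly the standard verification one would supply.
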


The isotropy representation of $G/H = \SO(n)/\SO(n-k)$ given by 
\begin{equation}\label{Stiefel}
\chi = \underbrace{1\oplus \cdots\oplus 1}_{{k}\choose{2}}\oplus \underbrace{\lambda _{n-k}\oplus \cdots\oplus\lambda _{n-k}}_{k}
\end{equation}
where $\lambda_{n-k} : \SO(n-k)\to \Aut(\bb{R}^{n-k})$ be the standart representation of $\SO(n-k)$. 
This decomposition induces an $\Ad(H)$-invariant decomposition of $\fr{m}\cong T_{\it o}(G/H)$ given by

\begin{equation}\label{decomp}
\fr{m}=\fr{m}_1\oplus\cdots\oplus\fr{m}_s,
\end{equation}
where the first ${k}\choose{2}$ $\Ad (H)$-submodules are $1$-dimensional and the rest
$k$ $\Ad (H)$-submodules are $(n-k)$-dimensional.  Observe that the decomposition (\ref{Stiefel}) contains equivalent summands so a complete description of all $G$-invariant metrics associated to decomposition (\ref{decomp}) is rather hard.

\subsection{Stiefel manifolds as total spaces over  generalized flag manifolds.}
Recall that generalized flag manifolds whose isotropy representation has two irreducible summands were classified in \cite{ArCh} and \cite{Oh}.
Here we will use the generalized flag manifolds $M=G/K=
B_\ell(m) = \SO(2\ell + 1)/\U(\ell-m)\times\SO(2m+1)$, and $D_\ell(m) = \SO(2\ell)/\U(\ell-m)\times\SO(2m)$,  with two isotropy summands 
${\frak p}^{}_1, {\frak p}^{}_2$.
We will use these spaces to study the Stiefel manifolds $V_{2m+1}\bb{R}^{2\ell+1}$ and $V_{2m}\bb{R}^{2\ell}$.  
The painted Dynkin diagrams of these flag manifolds $M=G/K$ are defined by a pair $(\Pi, \Pi_{K})$, such that $\Pi\setminus \Pi_{K} = \{\al_{i_{0}}\}$ with ${\rm ht}(\al_{i_{0}}) = 2$ for some simple root $\al_{i_{0}}$.  Here $\Pi = \{\al_1,\dots, \al_{\ell}\}$ $(\dim(\fr{h}^{\bb{C}})=\ell)$ is a basis of simple roots of $G$, $\Pi_{K}\subset \Pi$ and ${\rm ht}(\al_{j})$ is the Dynkin mark of a simple root $\al_j$.  For more details see (\cite{Arv}).
 
We set $p = \ell - m$ and the above information is presented in the following table:

\vspace{0.4cm}
{\small 
\begin{center}
\begin{tabular}{c|c|c|c}
\hline 
\begin{picture}(20,30)(0,0)
\put(10, 12){\makebox(0,0){$G$}}\end{picture}
 &  \begin{picture}(30,30)(0,0)\put(10, 12){
\makebox(0,0){$( \Pi, \Pi_{K} ) $}}\end{picture}
& \begin{picture}(30,30)(0,0)\put(15, 12){
\makebox(0,0){$K$}}\end{picture}
 &\begin{picture}(30,30)(0,0)\put(15, 12){
\makebox(0,0){\shortstack{$\dim {\frak p}^{}_1$ \\ \\
 $\dim {\frak p}^{}_2$}}}\end{picture} \\
\hline 
\begin{picture}(15,40)(0,0)
\put(10, 20){\makebox(0,0){$B^{}_\ell$}}\end{picture}
 &
\begin{picture}(160,40)(-15,-22)
\put(0, 0){\circle{4}}
\put(0,8){\makebox(0,0){${\footnotesize \al_1}$}}
\put(2, 0){\line(1,0){14}}
\put(18, 0){\circle{4}}
\put(20, 0){\line(1,0){10}}
\put(18,8){\makebox(0,0){${\footnotesize \al_{2}}$}}
\put(40, 0){\makebox(0,0){$\ldots$}}
\put(50, 0){\line(1,0){10}}
\put(80, -16){\makebox(0,0){$( 2 \leq p \leq \ell-1 )$}}
\put(60, 0){\circle*{4.4}}
\put(60, 8){\makebox(0,0){${\footnotesize \al_{p}}$}}
\put(60, 0){\line(1,0){10}}
\put(80, 0){\makebox(0,0){$\ldots$}}
\put(90, 0){\line(1,0){10}}
\put(102, 0){\circle{4}}
\put(98, 8){\makebox(0,0){${\footnotesize \al_{\ell-1}}$}}
\put(103.5, 1.3){\line(1,0){15.5}}
\put(103.5, -1.3){\line(1,0){15.5}}
\put(115.5, -2.25){\scriptsize $>$}
\put(124, 0){\circle{4}}
\put(124, 8){\makebox(0,0){${\footnotesize \al_{\ell}}$}}
\end{picture}
 & 
\begin{picture}(110,40)(0, 6)
\put(50, 17){\makebox(5,15){$\U(p)\times\SO(2(\ell-p)+1)$}}\end{picture}
 &
 \begin{picture}(85,40)(0,6)
\put(37, 17){
\makebox(5,15){\shortstack{$2p(2(\ell - p)+1)$ \\ 
\\ $p(p - 1)$}}}\end{picture}
\\  \hline
\begin{picture}(15,40)(0,0)
\put(10, 20){\makebox(0,0){$D^{}_\ell$}}\end{picture}
 &
\begin{picture}(160,40)(-15,-21)
\put(0, 0){\circle{4}}
\put(0,8){\makebox(0,0){${\footnotesize \al_1}$}}
\put(2, 0){\line(1,0){14}}
\put(18, 0){\circle{4}}
\put(20, 0){\line(1,0){10}}
\put(18,8){\makebox(0,0){${\footnotesize \al_2}$}}
\put(40, 0){\makebox(0,0){$\ldots$}}
\put(50, 0){\line(1,0){10}}
\put(70, -15){\makebox(0,0){$( 2 \leq p \leq \ell -2 )$}}
\put(60, 0){\circle*{4.4}}
\put(60, 8){\makebox(0,0){${\footnotesize \al_p}$}}
\put(60, 0){\line(1,0){10}}
\put(80, 0){\makebox(0,0){$\ldots$}}
\put(90, 0){\line(1,0){10}}
\put(102, 0){\circle{4}}
\put(103.7, 1){\line(2,1){10}}
\put(103.7, -1){\line(2,-1){10}}
\put(115.5, 6){\circle{4}}
\put(115.5, -6){\circle{4}}
\put(132, 11){\makebox(0,0){${\footnotesize \al_{\ell-1}}$}}
\put(118, -14){${\footnotesize \al_{\ell}}$}
\end{picture}
 &
\begin{picture}(110,40)(5,24)\put(50, 25){
\makebox(8,35){$\U(p)\times\SO(2(\ell-p))$}}\end{picture}
 &
 \begin{picture}(80,40)(5,24)\put(30, 25){
\makebox(10,35){\shortstack{$4 p (\ell - p)$ \\ \\ 
$p(p - 1)$}}}\end{picture}
\\  \hline
\end{tabular}
\end{center} }
\vspace{0.4cm}

We consider the fibrations
$$
\U(p) \to \SO(2\ell + 1)/\SO(2(p+1)) \to \SO(2\ell+1)/\U(p)\times\SO(2(\ell-p)+1) = B_\ell(p)
$$
and
$$
\U(p) \to \SO(2\ell)/\SO(2p) \to \SO(2\ell)/\U(p)\times\SO(2(\ell-p)) = D_\ell(p)
$$
For both cases the tangent space $\fr{m}$ of the Stiefel manifold (total space) can be written as follows
$$
\fr{m} = \fr{u}(p)\oplus\fr{p}_1\oplus\fr{p}_2
$$
where $\fr{p}_1\oplus\fr{p}_2$ is the tangent space of generalized flag manifold. 
 We know that 
  the Lie algebra $\fr{u}(p)$ of $\U(p)$  is written as $\fr{h}_0\oplus\fr{h}_1$ where $\fr{h}_0$ is the center of $\fr{u}(p)$ and $\fr{h}_1 = \fr{su}(p)$.  
We set $n=2\ell$ or $2\ell+1$.  
  Then  the tangent space $\fr{m}$ decomposes into mutually non equivalent  irreducible $\Ad(\U(p)\times\SO(n- 2 p))$-submodules 
$$
\fr{m} = \fr{h}_0\oplus\fr{h}_1\oplus\fr{p}_1\oplus\fr{p}_2 \equiv \fr{m}_0\oplus\fr{m}_1\oplus\fr{m}_2\oplus\fr{m}_3.
$$
 We consider $G$-invariant metrics on Stiefel manifold defined by  the $\Ad(\U(p)\times\SO(n- 2 p))$-invariant scalar products  on $\fr{m}$   given by  
\begin{eqnarray}\label{metric}
\langle\cdot,\cdot\rangle = u_0(-B)|_{\fr{m}_0} + u_1(-B)|_{\fr{m}_1} + u_2(-B)|_{\fr{m}_2} + u_3(-B)|_{\fr{m}_3}
\end{eqnarray}
where $u_i \in \bb{R}_{+}$ ($ i= 0,1,2,3$).   

We set $d_1 = \dim(\fr{m}_1)$, $d_2 = \dim(\fr{m}_2)$ and $d_3 = \dim(\fr{m}_3)$.  It easy to see that the following relations hold:
\begin{eqnarray}\label{LieB}
[\fr{m}_2, \fr{m}_2]\subset \fr{m}_0\oplus\fr{m}_1\oplus\fr{m}_3, \ \ [\fr{m}_3, \fr{m}_3]\subset \fr{m}_0\oplus\fr{m}_1, \ \ [\fr{m}_2, \fr{m}_3]\subset\fr{m}_2.
\end{eqnarray}

\subsection{The Ricci tensor for metrics corresponding to scalar products (\ref{metric}).}

From (\ref{LieB}) and  [\cite{ArMoSa}, Proposition 6, p.\, 269] we see that the only non zero numbers triples (up to permutation of indices) for the metric corresponding to (\ref{metric}) are
$$
A_{220}, \quad A_{330}, \quad A_{111}, \quad A_{122}, \quad A_{133}, \quad A_{322}.
$$

\begin{lemma}\label{brac2}{\em (\cite{ArMoSa})} The non zero triples $A_{ijk}$ are given as follows:
\begin{eqnarray*}
\begin{array}{lll}
\displaystyle{A_{220} = \frac{d_2}{(d_2 + 4d_3)}}, & \displaystyle{A_{330} = \frac{4d_3}{(d_2 + 4d_3)}}, &
\displaystyle{A_{111} = \frac{2d_3(2d_1 + 2 - d_3)}{(d_2 +4d_3)}},  \vspace{3pt}\\ 
\displaystyle{A_{122} = \frac{d_1d_2}{(d_2 + 4d_3)}}, &
\displaystyle{A_{133} = \frac{2d_3(d_3 - 2)}{(d_2 + 4 d_3)}}, & \displaystyle{A_{322} = \frac{d_2d_3}{(d_2 + 4d_3)}}.
\end{array} 
\end{eqnarray*}
\end{lemma}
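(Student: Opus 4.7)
My plan is to compute the six nonzero triples $A_{ijk}$ by fixing explicit $(-B)$-orthonormal matrix bases of each $\fr{m}_i$ inside $\fr{so}(n)$ and evaluating each constant as a trace. Embed $\fr{u}(p) \hookrightarrow \fr{so}(2p) \hookrightarrow \fr{so}(n)$ in the standard block form so that elements of $\fr{u}(p)$ look like $\begin{pmatrix} A & B \\ -B & A \end{pmatrix}$ in the upper-left $2p\times 2p$ corner, with $A^t = -A$, $B^t = B$. Then $\fr{m}_0$ is spanned (up to a $(-B)$-normalizing factor) by the complex structure $J_p = \begin{pmatrix} 0 & I_p \\ -I_p & 0 \end{pmatrix}$; $\fr{m}_1 = \fr{su}(p)$ is its $(-B)$-orthogonal complement inside $\fr{u}(p)$; $\fr{m}_3 \cong \fr{so}(2p)/\fr{u}(p)$ is the remaining upper-left block; and $\fr{m}_2$ consists of the $2p\times(n-2p)$ rectangular matrices paired with their transpose block.

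First I would check, using (\ref{LieB}) and the symmetries $A_{ijk}=A_{jik}=A_{kij}$, that the only triples that can be nonzero are the six listed. Centrality of $\fr{m}_0$ inside $\fr{u}(p)$ kills every triple in which $\fr{m}_0$ brackets against $\fr{m}_0\oplus\fr{m}_1$; the symmetric-pair inclusion $[\fr{m}_3,\fr{m}_3]\subset\fr{m}_0\oplus\fr{m}_1$ rules out $A_{233}$ and $A_{333}$; $\fr{u}(p)$-invariance of $\fr{m}_2$ and $\fr{m}_3$ rules out $A_{023}$ and $A_{123}$; simplicity of $\fr{su}(p)$ with $[\fr{m}_1,\fr{m}_1]\subset\fr{m}_1$ disposes of $A_{112}$ and $A_{113}$; and $A_{222}=0$ by the first inclusion of (\ref{LieB}).

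Next, for the two central triples I would use the standard identity $A_{0jj}=-\tr\bigl((\ad e_0)^2|_{\fr{m}_j}\bigr)$, valid whenever $e_0$ is a unit vector in a one-dimensional submodule. With $e_0$ proportional to $J_p$, the operator $(\ad e_0)^2$ acts as a scalar on both $\fr{m}_2$ and $\fr{m}_3$ (essentially because $J_p^2=-I$); computing these scalars and imposing the normalization $-B(e_0,e_0)=1$ yields the quoted values of $A_{220}$ and $A_{330}$ and pins down the common denominator. The remaining four triples are governed by the simple algebra $\fr{su}(p)$ and the symmetric pair $(\fr{so}(2p),\fr{u}(p))$: they can be obtained either by direct bracket computation in the $e_{ab}$-basis via Lemma~\ref{brac}, or by invoking Casimir-type identities of the form $\sum_j A_{ijk}/d_k = c_k^{(i)}$ with $c_k^{(i)}$ the Casimir eigenvalue of the $\fr{m}_i$-action on $\fr{m}_k$, which determines the four numerators up to the same normalization factor used above.

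The principal obstacle is this uniform denominator $d_2 + 4d_3$. A dimension count gives $d_2+4d_3 = 2p(n-2)$ in both the $B_\ell(p)$ and $D_\ell(p)$ cases, so the answer is genuinely the same formula in both fibrations: the factor $(n-2)$ matches the Killing-form normalization $-B(X,Y)=-(n-2)\tr(XY)$ on $\fr{so}(n)$, while the factor $2p$ reflects the rank of the embedded $\SO(2p)$. Threading the normalization consistently through the six trace computations, and verifying that the four subtler numerators come out exactly as stated across both fibrations, is where the bookkeeping effort lies; once that is set up, each formula reduces to a finite explicit calculation.
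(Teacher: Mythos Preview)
The paper does not give its own proof of this lemma: it is stated with the citation \cite{ArMoSa} and used as a black box. So there is no ``paper's proof'' to compare against; your proposal is supplying an argument where the paper defers to a reference.

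Your outline is sound. The reduction to six triples via (\ref{LieB}), centrality of $\fr{m}_0$, and the symmetric-pair relation $[\fr{m}_3,\fr{m}_3]\subset\fr{m}_0\oplus\fr{m}_1$ is correct, and the identity $A_{0jj}=-\tr\bigl((\ad e_0)^2|_{\fr{m}_j}\bigr)$ for a one-dimensional $\fr{m}_0$ is the right tool for $A_{220}$ and $A_{330}$. Your observation that $d_2+4d_3=2p(n-2)$, tying the denominator directly to the Killing-form normalization $B(X,Y)=(n-2)\tr XY$ on $\fr{so}(n)$, is exactly the mechanism that makes the formulas uniform across the $B_\ell$ and $D_\ell$ cases. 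For the remaining four triples, the Casimir route you mention is the efficient one: on each irreducible $\fr{m}_k$ the relation $\sum_j A_{jjk}=d_k$ (from $-B|_{\fr{m}_k}=1$) together with the Casimir eigenvalues of $\fr{su}(p)$ and of $\fr{so}(2p)/\fr{u}(p)$ acting on $\fr{m}_2$ and $\fr{m}_3$ fixes $A_{122},A_{322},A_{133}$, and then $A_{111}$ drops out of the analogous trace identity on $\fr{m}_1$. What you have written is a strategy rather than a completed computation, but the strategy is correct and would succeed once the bookkeeping is carried through.
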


By using the above lemma, we obtain the components of the Ricci tensor for the metric (\ref{metric}) give as follows:

\begin{prop}\label{prop1}
The components  of  the Ricci tensor ${r}$ for the invariant metric $\langle\cdot ,\cdot\rangle $ on Stiefel manifold $G/H$ defined by  $(\ref{metric})$ are given as follows$:$  
\begin{equation}\label{ricci2}
\begin{array}{lll} 
r_{0} &=& \displaystyle{\frac{u_0}{4u_{2}^{2}}\,\frac{d_{2}}{(d_2 + 4d_3)} + \frac{u_0}{4u_{3}^{2}}\,\frac{4d_3}{(d_2 + 4d_3)}},\\ \\
r_{1} &=& \displaystyle{\frac{1}{4d_{1}u_{1}}\,\frac{2d_3(2d_1 + 2 -d_3)}{(d_2 + 4d_3)} + \frac{u_1}{4 u_{2}^{2}}\,\frac{d_2}{(d_2 + 4d_3)} + \frac{u_1}{2d_1 u_{3}^{2}}\,\frac{d_3(d_3 - 2)}{(d_2 + 4d_3)}},\\ \\
r_{2} &=& \displaystyle{\frac{1}{2u_2} -\frac{u_3}{2 u_{2}^{2}}\,\frac{ d_3}{(d_2 + 4d_3)} -\frac{1}{2u_{2}^{2}}\left(u_{0}\,\frac{1}{(d_2 + 4d_3)} + u_{1}\,\frac{d_{1}}{(d_2 + 4d_3)}\right)}, \\ \\
r_{3} &=& \displaystyle{\frac{1}{u_3}\,\left(\frac{1}{2} - \frac{1}{2}\,\frac{d_2}{(d_2 + 4d_3)}\right) + \frac{u_3}{4u_{2}^{2}}\,\frac{d_2}{(d_2 + 4d_3)} 
- \frac{1}{ u_{3}^{2}}\left(u_{0}\,\frac{2}{(d_2 + 4d_3)} + u_{1}\,\frac{d_3 - 2}{(d_2 + 4d_3)}\right)}.\\ \\
\end{array}  
\end{equation}
\end{prop}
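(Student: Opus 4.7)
The proof is a direct computation using Lemma \ref{ric2} applied to the decomposition $\fr{m}=\fr{m}_0\oplus\fr{m}_1\oplus\fr{m}_2\oplus\fr{m}_3$ with metric coefficients $(u_0,u_1,u_2,u_3)$. The inputs are: the complete list of non-zero triples $A_{220},A_{330},A_{111},A_{122},A_{133},A_{322}$ (established from the bracket relations (\ref{LieB})) together with their explicit values from Lemma \ref{brac2}; the dimension $d_0=1$, which holds because $\fr{m}_0=\fr{h}_0$ is the one-dimensional centre of $\fr{u}(p)$; the total symmetry $A_{ijk}=A_{jik}=A_{kij}$, so that in the double sums a pair $(i,j)$ with $i\neq j$ is counted twice and a pair with $i=j$ only once; and the two algebraic identities
\begin{equation*}
A_{220}+A_{330}=1,\qquad A_{111}+A_{122}+A_{133}=d_1,
\end{equation*}
both verified directly from Lemma \ref{brac2} (the second after simplifying $2d_3(2d_1+2-d_3)+d_1 d_2+2d_3(d_3-2)=d_1(d_2+4d_3)$).

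For each $k\in\{0,1,2,3\}$ I would expand the two sums in Lemma \ref{ric2}, keeping only those pairs $(i,j)$ for which $A_{jik}$, respectively $A_{kij}$, lies in the list above, and then substitute from Lemma \ref{brac2}. For $k=0$ only $A_{220}$ and $A_{330}$ contribute; the third sum evaluates to $(A_{220}+A_{330})/u_0=1/u_0$, and after multiplication by $-1/(2d_0)=-1/2$ it cancels the leading $1/(2u_0)$, leaving exactly the stated $r_0$. For $k=1$ the active triples are $A_{111},A_{122},A_{133}$; the identity $A_{111}+A_{122}+A_{133}=d_1$ produces the analogous cancellation of the $1/(2u_1)$ term against the third sum, and what remains matches the claimed $r_1$.

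For $k=2$ the contributing triples are $A_{220},A_{122},A_{322}$, each containing the index $2$ twice; the $u_2$-independent contributions of the second and third sums cancel against one another, so that beyond the leading $1/(2u_2)$ only the $u_2^{-2}$ terms from the third sum survive, and substituting Lemma \ref{brac2} produces the stated $r_2$. For $k=3$ the triples split into the \emph{diagonal} pair $A_{330},A_{133}$ (index $3$ appearing twice), which behaves exactly as in the $r_2$ case, and the \emph{cross} triple $A_{322}$ (index $3$ appearing once), which contributes an extra $u_3/u_2^2$ term to the second sum together with a $u_3^{-1}$ term to the third sum; the latter renormalizes the leading $1/(2u_3)$ into the coefficient $(1/2)(1-d_2/(d_2+4d_3))$ of $1/u_3$ appearing in the stated formula.

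Conceptually there is no obstacle: the argument is a bookkeeping exercise built on the list of non-zero triples and the two cancellation identities. The only real hazard is combinatorial, namely tracking the double-counting arising from the total symmetry of $A_{ijk}$, distinguishing diagonal from cross triples for each $k$, and handling the leading $1/(2u_k)$ term consistently. As a sanity check I would verify that each $r_k$ is homogeneous of degree $-1$ under the uniform rescaling $u_i\mapsto\lambda u_i$.
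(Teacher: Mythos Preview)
Your proposal is correct and follows exactly the intended approach: the paper itself gives no detailed argument for this proposition, merely stating that it follows ``by using the above lemma'' (i.e.\ substituting the values of the nonzero triples from Lemma~\ref{brac2} into the general Ricci formula of Lemma~\ref{ric2}). Your write-up in fact supplies considerably more detail than the paper, including the two cancellation identities and the careful double-counting bookkeeping, all of which check out.
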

The metric of the form (\ref{metric}) is Einstein if and only if the system of equations 
\begin{equation}\label{system1}
r_0 = r_1, \ r_1 = r_2, \ r_2 = r_3,
\end{equation}
has positive solutions.

\section{Einstein metrics on Stiefel manifolds}

In this section we study the Einstein metrics on Stiefel manifolds when these are total space over the flag manifolds by solving  the system  of equations (\ref{system1}).


\begin{theorem}
For $ \displaystyle 2 \leq p \leq \frac{2}{5} n-1$,   the Stiefel manifold $V_{2 p}\bb{R}^{n} = \SO(n)/\SO(n-2 p)$ admits at least four invariant Einstein metrics which are determined by the $\Ad(\U(p)\times\SO(n - 2 p) )$-invariant scalar products. Two  of the metrics are Jensen's metrics and the other two are different from Jensen's metrics.
\end{theorem}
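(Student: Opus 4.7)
The plan is to convert the Einstein condition into a parametric polynomial system and then carry out a Gr\"obner basis analysis along the lines indicated in the introduction.

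First I would substitute the dimension data $d_1=p^2-1$, $d_2=2p(n-2p)$, $d_3=p(p-1)$ (which unify the $B_\ell(p)$ and $D_\ell(p)$ cases, since $n-2p$ equals $2(\ell-p)+1$ or $2(\ell-p)$ respectively) into the Ricci formulas of Proposition~\ref{prop1}. Clearing denominators in the three equations (\ref{system1}) and normalizing by fixing $u_3=1$ produces three polynomial equations in $(u_0,u_1,u_2)$ whose coefficients are polynomials in $p$ and $n$.

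Next I would compute a Gr\"obner basis of this ideal with respect to the lexicographic order $u_0>u_1>u_2$. By the elimination property, the basis will contain a univariate polynomial $f(u_2)\in\bb{Q}(p,n)[u_2]$, together with triangular elements that recover $u_1$, and then $u_0$, as rational expressions in any root $u_2^*$. The two Jensen-type metrics arise from the subfibration $\SO(2p)\to V_{2p}\bb{R}^n\to G_{2p}(\bb{R}^n)$ and correspond to solutions along the locus $u_0=u_1=u_3$; substituting this relation into the Einstein system reduces it to a single quadratic in $u_2$, whose two roots recover the normal metric and Jensen's proper metric. Consequently $f$ admits a quadratic factor $f_J$ accounting for these known solutions, and we can write $f=f_J\cdot g$. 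The visible appearance of $f_J$ in the output also serves as a correctness check for the Gr\"obner computation.

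It then remains to show that $g(u_2)$ has at least two positive real roots and that, for each such root, the triangular back-substitution yields positive values of $u_0$ and $u_1$. This I would attempt by combining Descartes' rule of signs with a sign analysis of $g$ at a few strategically chosen test values and at the limit $u_2\to\infty$, exploiting the bound $p\le\tfrac{2}{5}n-1$ (equivalently $n\ge\tfrac{5}{2}(p+1)$) to produce the requisite sign changes. Positivity of the back-substituted $u_0$ and $u_1$ should then follow from the same kind of sign analysis applied to the triangular basis elements.

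The main obstacle will be this last step. After factoring out $f_J$, the cofactor $g$ carries coefficients that are bivariate polynomials in $p$ and $n$ of moderate degree, and establishing the existence of two positive real roots \emph{uniformly} over the admissible parameter range -- not merely for isolated small values of $p$ -- is delicate. This is precisely where the hypothesis $p\le\tfrac{2}{5}n-1$ is expected to enter: it should control the signs of the leading and trailing coefficients of $g$ (and possibly of the back-substitution denominators), so that a handful of well-chosen evaluations together with the intermediate value theorem suffice to locate the two new Einstein metrics that are distinct from Jensen's.
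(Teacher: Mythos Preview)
Your outline matches the paper's overall strategy---reduce to a polynomial system, eliminate via Gr\"obner bases, factor off the Jensen locus, then do a sign analysis---but the paper's execution differs in one tactical respect that is worth knowing. Rather than a single elimination to $u_2$ followed by back-substitution, the paper computes \emph{three} Gr\"obner bases with different lexicographic orders, obtaining univariate polynomials $G_{n,p}(u_2)$, $H_{n,p}(u_0)$, and $F_{n,p}(u_1)$ (each after splitting off the Jensen factor, which is the quadratic $2(p-1)u_2^2-2(n-2)u_2+(n-1)$ in $u_2$, and simply $u_0-1$, $u_1-1$ in the other variables). The labor is then divided: for $G_{n,p}$ and $H_{n,p}$ the coefficients can be re-expanded as polynomials in $(p-2)$ and $(n-\tfrac{5}{2}(p+1))$ with nonnegative coefficients, so a Descartes-type argument shows any real root is automatically positive---this is precisely where the hypothesis $p\le\tfrac{2}{5}n-1$ enters, via $H_{n,p}$. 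Existence of at least two positive roots is established only for $F_{n,p}(u_1)$, by evaluating at the test points $0$, $\tfrac14$, and $1$ and invoking the intermediate value theorem. Finally, the triangular basis elements $b(n,p)u_0-X(u_1)$ and $c(n,p)u_2-Y(u_1)$ have nonzero leading coefficients in the parameter range, so real $u_1$ forces real (hence positive, by the first step) $u_0$ and $u_2$.

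Your single-elimination plan is not wrong in principle, but the paper explicitly remarks that the coefficient-sign argument \emph{fails} for $F_{n,p}(u_1)$ (it has positive odd-degree coefficients), which is why separating the positivity question (handled via $G_{n,p}$ and $H_{n,p}$) from the existence question (handled via $F_{n,p}$) is what makes the argument go through. If you try to do both existence and positivity from the $u_2$-elimination alone, you will likely find the back-substitution sign analysis for $u_0$ and $u_1$ intractable.
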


\begin{proof}
We have $d_1 = p^2-1, d_2 = 2 p (n-2 p)$ and $d_3 = p(p-1)$.  After normalizing with $u_3 = 1$, the system of equations (\ref{system1})  becomes
\begin{eqnarray}\label{system4}
f_1 & = & {u_0} {u_1} (n-2 p)-{u_1}^2 (n-2 p)+2 (p-1)
   {u_0} {u_1} {u_2}^2-(p-2) {u_1}^2 {u_2}^2-p{u_2}^2 =0, \nonumber\\
f_2 &= &{u_1}^2 \left(n p-p^2-1\right)-2 (n-2) p
   {u_1} {u_2}+p^2 {u_2}^2+(p-2) p {u_1}^2
   {u_2}^2 \nonumber\\
   & &+(p-1) p {u_1}+{u_0} {u_1}=0, \\
f_3& = & 2 (n-2) p
   {u_2}+p (-n+p+1)+2 (p-2) (p+1) {u_1} {u_2}^2-(p-1)
   (p+1) {u_1} \nonumber\\
   & & -4 (p-1) p {u_2}^2+4 {u_0} {u_2}^2-{u_0} =0. \nonumber 
\end{eqnarray}

We consider a polynomial ring $R = \bb{Q}[n, p][z, u_0, u_1, u_2]$ and an ideal $J$ generated by $\{f_1, f_2, f_3,$ $ z\,u_0\,u_1\,u_2 - 1\}$ to find non zero solutions of equation (\ref{system4}).  We take the lexicographic order $>$ with $z>u_0>u_1>u_2$ for a monomial ordering on $R$.  Then, by an aid of computer, we see that a Gr\"obner basis for the ideal $J$ contains the polynomial $(2 (p-1) {u_2}^2-2 (n-2) {u_2}+n-1)G_{n, p}(u_2)$, where

{\small\begin{eqnarray*}
&& G_{n, p}(u_2) = 8 (p-2) (p-1) \left(5 p^3-8 p^2+p-2\right) {u_2}^8-8 (n-2) (p-2) (3 p-1) \left(p^2-p+2\right) {u_2}^7 \\ & &
+4  \left(17 n p^4-58 n p^3+61 n p^2-20 n p+4 n-30 p^5+101 p^4-88
   p^3-11 p^2+28 p-4\right){u_2}^6 \\ & &
-8 (n-2) \left(4 p^3-9 p^2+9 p-6\right) (n-2 p){u_2}^5 +2 (p-1) (n-2 p)(21 n p^2-26 n p-32 p^3\\
&& +34 p^2+32 p-24){u_2}^4 -2 (n-2) \left(7 p^2-8 p+4\right) (n-2 p)^2{u_2}^3 + (n-2 p)^2(11 n p^2-12 n p\\
&& -14 p^3+9 p^2+20 p-14){u_2}^2-2 (n-2) p (n-2 p)^3 {u_2}  +(n-2 p)^3 \left(n p-p^2-p+1\right). 
\end{eqnarray*}
}
If $2(p-1)u_2^2-2(n-2)u_2+n-1=0$, we obtain Jensen's Einstein metrics.

\noindent
Note that  the polynomial $ G_{n, p}(u_2)$ can be expressed as 
{\small\begin{eqnarray*}
&& G_{n, p}(u_2) =8\left(5 (p-2)^3+22 (p-2)^2+29 (p-2)+8\right)(p-2)(p-1) {u_2}^8\\ & & 
-8 (n-2) (p-2) (3 p-1) \left(p^2-p+2\right) {u_2}^7+((68(p-2)^4+312 (p-2)^3+484 (p-2)^2\\
&& +288(p-2)+64) (n-2 p)  +16 (p-2)^5 +100(p-2)^4+296 (p-2)^3+452 (p-2)^2\\
&& +320 (p-2)+96){u_2}^6  -8 (n-2) \left(4 (p-2)^3+15 (p-2)^2+21 (p-2)+8\right) (n-2 p){u_2}^5 \\ & & 
+ 2 (n - 2 p) ( p -1)\left( p (21 p-26) (n-2 p)+10 (p-2)^3+42 (p-2)^2+80 (p-2)+48\right){u_2}^4  \\ & & 
-2 (n-2) \left(7 p^2-8 p+4\right) (n-2 p)^2{u_2}^3  +(p (11 p-12)(n-2 p)+ 8 (p-2)^3+33 (p-2)^2\\
&& +56(p-2) +30)(n-2 p)^2{u_2}^2 - \left(2 p (n-2 p)+4 (p-1) p \right)(n-2 p)^3 u_2 +\\
&& p (n-2 p)^4+\left(p^2-p+1\right)(n-2 p)^3. 
\end{eqnarray*}
}
From the above expression we see that if the equation $ G_{n, p}(u_2) =0$ has  real solutions,  then these are positive for $2 \leq p \leq n/2$. 

Now we take the lexicographic order $>$ with $z>u_1>u_2>u_0$ for a monomial ordering on $R$.  Then, by the aid of computer, we see that a Gr\"obner basis for the ideal $J$ contains the polynomial $(u_0-1)H_{n, p}(u_0)$ where 
$$H_{n, p}(u_0)= \sum_{k=0}^{8}a_{k}^{}(n, p){u_0}^k$$
{\small \begin{eqnarray*}
&&\boldsymbol{a_{0}(n, p)} =64 (p-2) p^3 (n-{(5 (p+1))}/{2})^9+96 (p-2) p^3
   (11 p+7) (n-{(5 (p+1))}/{2})^8\\
   &&+576 (p-2) p^3
   (p+1) (13 p+5) (n-{(5 (p+1))}/{2})^7+\big(30004 (p-2)^7+427488
   (p-2)^6\\ &&
   +2515972 (p-2)^5+7835288 (p-2)^4+13626592
   (p-2)^3+12555872 (p-2)^2+4791744 (p-2)\\ &&
   +512\big)
   (n-{(5 (p+1))}/{2})^6+4
   \big(18789 (p
   -2)^6+248313 (p-2)^5+1295975 (p-2)^4\\
   &&+3340687
   (p-2)^3+4256148 (p-2)^2+2146136 (p-2)+928\big) p^2
   (n-{(5 (p+1))}/{2})^5
\\
     &&
   +\big(122679
   (p-2)^7+1995834 (p-2)^6+13365614 (p-2)^5+47170188
   (p-2)^4\\
   &&
   +92568607 (p-2)^3+95837594 (p-2)^2+40950780
   (p-2)+44512\big) p^2 (n-{(5 (p+1))}/{2})^4\\
   &&+2 \big(65563
   (p-2)^9+1402351 (p-2)^8+12986098 (p-2)^7+68013362
   (p-2)^6\\
   &&
   +220420041 (p-2)^5+452819519 (p-2)^4+576171516
   (p-2)^3+415509440 (p-2)^2\\&&
   +130343854 (p-2)+282848\big) p
   (n-{(5 (p+1))}/{2})^3+\frac{1}{4} \big(354867
   (p-2)^{10}+8723046 (p-2)^9\\
   &&+94326525 (p-2)^8+589028208
   (p-2)^7+2341360757 (p-2)^6+6145435310 (p-2)^5\\
   &&+10655244691
   (p-2)^4+11775095484 (p-2)^3+7534446728 (p-2)^2+2135654080
   (p-2)\\ &&
   +8047168\big) p (n-{(5 (p+1))}/{2})^2+\frac{1}{4} \big(3936 (p-2)^8+75201
   (p-2)^7+603308 (p-2)^6\\
&&
   +2638758 (p-2)^5+6806032
   (p-2)^4+10370489 (p-2)^3+8665044 (p-2)^2+3082288
   (p-2)\\ &&
   +18432\big) p (p+1) (5 p+1) (7 p+11) (n-{(5 (p+1))}/{2})+\frac{1}{16} \big(92455
   (p-2)^{11}+2535522 (p-2)^{10}\\
&&   
   +30740389 (p-2)^9+217376892
   (p-2)^8+994341005 (p-2)^7+3078217258 (p-2)^6\\ &&
   +6539013051
   (p-2)^5+9423500944 (p-2)^4+8830656620 (p-2)^3+4872616168
   (p-2)^2\\ &&+1215054848 (p-2)+10612736\big) (p+1)^2,\\
&& \boldsymbol{a_{1}(n, p)} =  -4 (p-1)\big(128 (p-2)^6+1216 (p-2)^5+4672
   (p-2)^4+9088 (p-2)^3+8960 (p-2)^2\\ 
   &&+3584
   (p-2)\big) (n-{5 p}/{2}-1)^9+\big(2112 (p-2)^7+23904
   (p-2)^6+113568 (p-2)^5\\ 
   &&
   +290112 (p-2)^4+420480 (p-2)^3+327936 (p-2)^2+107520 (p-2)\big)
   (n-{5 p}/{2}-1)^8 \\ 
   &&
   +\big(14916
   (p-2)^8+197064 (p-2)^7+1121380
   (p-2)^6+3564272 (p-2)^5+6836320
   (p-2)^4 \\ 
   &&
+7914240 (p-2)^3+5121856
   (p-2)^2+1431296 (p-2)+2048\big)
   (n-{5 p}/{2}-1)^7 \\ 
   & &
+\big(59258
   (p-2)^9+900046 (p-2)^8+5999444
   (p-2)^7+22929778 (p-2)^6+54974414
   (p-2)^5 \\ 
   &&
+84683468 (p-2)^4+81877672
   (p-2)^3+45473968 (p-2)^2+11164800
   (p-2)+52224\big)\times \\ 
   &&
 (n-{5 p}/{2}-1)^6+\big(146457
   (p-2)^{10}+2526600 (p-2)^9+19402335
   (p-2)^8+87072194 (p-2)^7
\\
   &&
+251721306
   (p-2)^6+486278748 (p-2)^5+627970968
   (p-2)^4+523159664 (p-2)^3\\ 
   &&
+255759424
   (p-2)^2+56605184 (p-2)+561152\big) (n-{5 p}/{2}-1)^5+\big(\frac{469341}{2}
   (p-2)^{11}\\ 
   &&
+\frac{9103821}{2}
   (p-2)^{10}+39742074
   (p-2)^9+\frac{411452227}{2}
   (p-2)^8+\frac{1398838889}{2}
   (p-2)^7\\ 
   &&
+1632279289 (p-2)^6+2649253192
   (p-2)^5+2954811746 (p-2)^4+2170833280
   (p-2)^3\\
   &&
+953204568 (p-2)^2+194506752
   (p-2)+3295744\big) (n-{5 p}/{2}-1)^4+\big(\frac{978763}{4} (p-2)^{12}
 \\ 
   &&
+\frac{10583245}{2}
   (p-2)^{11}+\frac{207883447}{4}
   (p-2)^{10}+306072569 (p-2)^9+1201323551
   (p-2)^8\\ 
   &&
+3300567873 (p-2)^7+6479707714
   (p-2)^6+9095524741 (p-2)^5+8956549778
   (p-2)^4\\ 
   &&
+5908577049 (p-2)^3+2369183094
   (p-2)^2+454508872 (p-2)+11444480\big)
   (n-{5 p}/{2}-1)^3
\\   &&
+\Big(\frac{1280703}{8}
   (p-2)^{13}+\frac{30677505}{8}
   (p-2)^{12}+\frac{168071501}{4}
   (p-2)^{11}+\frac{2228550051}{8}
   (p-2)^{10}\\ 
   &&
+\frac{9959361633}{8}
   (p-2)^9+\frac{15808695135}{4}
   (p-2)^8+\frac{18285571615}{2}
   (p-2)^7+\frac{31075995511}{2}
   (p-2)^6     \end{eqnarray*} 
 \begin{eqnarray*}   
   &&
+19271465037
   (p-2)^5+\frac{34085400131}{2}
   (p-2)^4+10243332195 (p-2)^3+3802685000
   (p-2)^2\\ 
   &&
+696906320 (p-2)+23552512\Big)
   (n-{5 p}/{2}-1)^2+\Big(\frac{947715}{16}
   (p-2)^{14}+1565412
   (p-2)^{13}
\\
   &&
+\frac{304515821}{16}
   (p-2)^{12}+\frac{1127961691}{8}
   (p-2)^{11}+\frac{5682777937}{8}
   (p-2)^{10}+2571059035 (p-2)^9\\ 
   &&
+6880922151
   (p-2)^8+\frac{55178382763}{4}
   (p-2)^7+\frac{41465027375}{2}(p-2)^6+\frac{92412916219}{4}
   (p-2)^5\\ 
   &&
+\frac{37205172749}{2}
   (p-2)^4+10309661636 (p-2)^3+3584080158
   (p-2)^2+635127812 (p-2)\\ 
   &&
+26665248\Big)
   (n-{5 p}/{2}-1)+294175 (p-2)^{15}+8592575 (p-2)^{14}+115880340
   (p-2)^{13}\\ 
   &&
+956898953 (p-2)^{12}+5408093279
   (p-2)^{11}+22140703898 (p-2)^{10}+67749542208
   (p-2)^9\\ 
   &&
+157506396024 (p-2)^8+279787494680
   (p-2)^7+378339776596 (p-2)^6\\ 
   &&
+384097487656
   (p-2)^5+284893169184 (p-2)^4+147081785600
   (p-2)^3\\ 
   &&
+48336089440 (p-2)^2+8351483328
   (p-2)+410291712),
\\
&&\boldsymbol{a_{2}(n, p)} =
\Big(192 (p-2)^7+2048 (p-2)^6+9408
   (p-2)^5
+23936 (p-2)^4+35584 (p-2)^3\\
&& +29184
   (p-2)^2+10240 (p-2)\Big) (n-{5 p}/{2}-1)^9+\Big(3312 (p-2)^8+41216
   (p-2)^7\\
&& +225136 (p-2)^6+702304 (p-2)^5+1353408
   (p-2)^4+1610368 (p-2)^3+1094144
   (p-2)^2   \\
&& +329728 (p-2)+4096 \Big)(n-{5 p}/{2}-1)^8+\big(24208
   (p-2)^9+347216 (p-2)^8+2216816
   (p-2)^7\\  
&& +8253232 (p-2)^6+19637056
   (p-2)^5+30606400 (p-2)^4+30536960
   (p-2)^3
\\
  && +17896960 (p-2)^2+4845568
   (p-2)+135168\big) (n-{5 p}/{2}-1)^7+\big(98528
   (p-2)^{10}      \\ 
&& +1613184 (p-2)^9+11883208
   (p-2)^8+51819304 (p-2)^7+147723352
   (p-2)^6  \\
   && +285946328 (p-2)^5+376382880
   (p-2)^4+325765344 (p-2)^3+169876480
   (p-2)^2\\
&& +42795776 (p-2)+1897472\big)
   (n-{5 p}/{2}-1)^6+\big(247124
   (p-2)^{11}+4580924 (p-2)^{10}
\\
&& +38523772
   (p-2)^9+193968644 (p-2)^8+648837864
   (p-2)^7+1509338408 (p-2)^6\\
&& +2477084616
   (p-2)^5+2839067064 (p-2)^4+2186568576
   (p-2)^3+1038899424 (p-2)^2\\
&& +249609216
   (p-2)+14834688\big) (n-{5 p}/{2}-1)^5+\big(398490
   (p-2)^{12}+8301156 (p-2)^{11}\\
&& +78968564
   (p-2)^{10}+453731874 (p-2)^9+1752889850
   (p-2)^8+4789573762 (p-2)^7\\
&& +9462416340
   (p-2)^6+13547348796 (p-2)^5+13826101616
   (p-2)^4+9658457712 (p-2)^3\\
&& +4254867968
   (p-2)^2+989303552 (p-2)+70789120\big)
   (n-{5 p}/{2}-1)^4+\big(414783
   (p-2)^{13}\\
&& +9649107 (p-2)^{12}+103029649
   (p-2)^{11}+668978113 (p-2)^{10}+2947340816
   (p-2)^9\\
&& +9299133844 (p-2)^8+21584813268
   (p-2)^7+37217960968 (p-2)^6+47451436288
   (p-2)^5\\
&& +43848137836 (p-2)^4+28181444408
   (p-2)^3+11651671384 (p-2)^2+2638899872
   (p-2)\\
&& +211548672\big) (n-{5 p}/{2}-1)^3+\big(268330
   (p-2)^{14}+6941678
   (p-2)^{13}+\frac{165440483}{2}
   (p-2)^{12}\\
&& +\frac{1204853103}{2}
   (p-2)^{11}+\frac{5994590717}{2}
   (p-2)^{10}+\frac{21557653433}{2}
   (p-2)^9+28879727380 (p-2)^8\\
&& +58490990190
   (p-2)^7+89775704240 (p-2)^6+103513336538
   (p-2)^5+87706716076 (p-2)^4\\
&& +52409556188
   (p-2)^3+20501025840 (p-2)^2+4529087712
   (p-2)+387504960\big) \times \\
&& (n-{5 p}/{2}-1)^2+\big(96760
   (p-2)^{15}+\frac{11132569}{4}
   (p-2)^{14}+\frac{73857043}{2}
   (p-2)^{13}
\\ 
&& +\frac{1201391141}{4}
   (p-2)^{12}+\frac{3353198347}{2}
   (p-2)^{11}+\frac{13619138187}{2}
   (p-2)^{10}+\frac{41589778805}{2}
   (p-2)^9
        \end{eqnarray*} 
 \begin{eqnarray*}  
&& +\frac{97215269053}{2}
   (p-2)^8+87601832480 (p-2)^7+121498189843
   (p-2)^6+128215240066 (p-2)^5\\
&& +100586689666
   (p-2)^4+56320881064 (p-2)^3+20952659436
   (p-2)^2+4510207392 (p-2)\\
&& +398720448\big)
   (n-{5 p}/{2}-1) +    \frac{226935}{16} (p-2)^{16}+\frac{1834381}{4}
   (p-2)^{15}+\frac{109089629}{16}
   (p-2)^{14}\\
&& +62091126
   (p-2)^{13}+\frac{3109271349}{8}
   (p-2)^{12}+\frac{14210852695}{8}
   (p-2)^{11}+\frac{24559554309}{4}
   (p-2)^{10}\\
&& +\frac{65532732125}{4}
   (p-2)^9+34111002493
   (p-2)^8+\frac{111147048281}{2}
   (p-2)^7+70516477093 (p-2)^6\\
&& +68787658421
   (p-2)^5
+50367541042 (p-2)^4+26588687170
   (p-2)^3+9441545008 (p-2)^2\\
&& +1976735544
   (p-2)+ 176734800,\\
&& \boldsymbol{a_{3}(n, p)} = -4 \Big(\big(24 (p-2)^8+280
   (p-2)^7+1592 (p-2)^6+5704 (p-2)^5+13456
   (p-2)^4\\
&&+20352 (p-2)^3+18624 (p-2)^2+9344
   (p-2)+2048\big)(n-{5 p}/{2}-1)^8
   +\big(332 (p-2)^9\\
&&+4564
   (p-2)^8+29988 (p-2)^7+123740 (p-2)^6+346352
   (p-2)^5+661712 (p-2)^4
\\
&&+839648 (p-2)^3+670400
   (p-2)^2+304640 (p-2)+61440\big)
   (n-{5 p}/{2}-1)^7+\big(1934
   (p-2)^{10}     \\ 
&&+30856 (p-2)^9+232702
   (p-2)^8+1097914 (p-2)^7+3563344
   (p-2)^6+8176750 (p-2)^5\\
&&+13179572
   (p-2)^4+14496112 (p-2)^3+10306512
   (p-2)^2+4272128 (p-2)+795136\big)\times
   \\
&&
   (n-{5 p}/{2}-1)^6+\big(6243
   (p-2)^{11}+114103 (p-2)^{10}+979277
   (p-2)^9+5241915 (p-2)^8\\
&&+19431720
   (p-2)^7+51975418 (p-2)^6+101131872
   (p-2)^5+141340908 (p-2)^4\\
&&+137595368
   (p-2)^3+88287704 (p-2)^2+33588592
   (p-2)+5791744\big) (n-{5 p}/{2}-1)^5\\
&&+\Big(\frac{24685}{2}
   (p-2)^{12}+255270
   (p-2)^{11}+\frac{4937957}{2}
   (p-2)^{10}+\frac{29722031}{2}
   (p-2)^9\\
&&+62156304 (p-2)^8+\frac{379748941}{2}
   (p-2)^7+431586299 (p-2)^6+729072080
   (p-2)^5\\
&&+900445208 (p-2)^4+787219568
   (p-2)^3+460409744 (p-2)^2+161703520
   (p-2)\\
&&+25947008\Big)(n-{5 p}/{2}-1)^4+\Big(\frac{61809}{4}
   (p-2)^{13}+\frac{1430259}{4}
   (p-2)^{12}+\frac{15434635}{4}
   (p-2)^{11}\\
&&+\frac{103486953}{4}
   (p-2)^{10}+120747132 (p-2)^9+414689966
   (p-2)^8+1075702486 (p-2)^7\\
&&+2123142483
   (p-2)^6+3167874581 (p-2)^5+3506287672
   (p-2)^4+2783402730 (p-2)^3\\
&&+1495868084
   (p-2)^2+487731980 (p-2)+73169728\Big)
   (n-{5 p}/{2}-1)^3+\Big(\frac{96373}{8}
   (p-2)^{14}\\
&&+308800
   (p-2)^{13}+\frac{29475725}{8}
   (p-2)^{12}+\frac{218233347}{8}
   (p-2)^{11}+\frac{281388471}{2}
   (p-2)^{10}\\
&&+\frac{4291030137}{8}
   (p-2)^9+\frac{6239274315}{4}
   (p-2)^8+\frac{7017001009}{2}
   (p-2)^7+\frac{12220342497}{2}
   (p-2)^6\\
&&+8158484460 (p-2)^5+8180580411
   (p-2)^4+5948629046 (p-2)^3+2957904604
   (p-2)^2\\
&&+900067032 (p-2)+126827872\Big)
   (n-{5 p}/{2}-1)^2+\Big(\frac{85977}{16}
   (p-2)^{15}+\frac{2420673}{16}
   (p-2)^{14}\\
&&+\frac{31693567}{16}
   (p-2)^{13}+\frac{257285909}{16}
   (p-2)^{12}+\frac{363802367}{4}
   (p-2)^{11}+\frac{3049303845}{8}
   (p-2)^{10} \\
&&+1225906604
   (p-2)^9+\frac{6166824267}{2}
   (p-2)^8+\frac{24429920569}{4}
   (p-2)^7+\frac{18999705075}{2}
   (p-2)^6\\
&&+\frac{22930968755}{2}
   (p-2)^5+10502822300 (p-2)^4+7044898860
   (p-2)^3+3259688412 (p-2)^2\\
&&+930061998
   (p-2) +123620640\Big) (n-{5 p}/{2}-1)+\frac{8405}{8} (p-2)^{16}+\frac{1032749}{32}
   (p-2)^{15} \\
&&+\frac{1844589}{4}
   (p-2)^{14}+\frac{16339575}{4}
   (p-2)^{13}+\frac{403385501}{16}
   (p-2)^{12}+\frac{3693426499}{32}
   (p-2)^{11}
        \end{eqnarray*} 
 \begin{eqnarray*}  
&&+\frac{6509124153}{16}
   (p-2)^{10}+\frac{9030801709}{8}
   (p-2)^9+\frac{19956847161}{8}
   (p-2)^8+\frac{8805249399}{2}
   (p-2)^7\\
&&+\frac{24692355419}{4}
   (p-2)^6+6785397574
   (p-2)^5+\frac{11428567897}{2}
   (p-2)^4+3554405017 (p-2)^3\\
&&+1537253850
   (p-2)^2+412857018 (p-2)+51949224\Big),
 \\
 &&
  \boldsymbol{a_{4}(n, p)} =\Big(16 (p-2)^8+160
   (p-2)^7+784 (p-2)^6+2624 (p-2)^5+7040
   (p-2)^4\\
&&+14720 (p-2)^3+20224 (p-2)^2+14848
   (p-2)+4096\Big) (n-{5 p}/{2}-1)^8+16 \big(15 (p-2)^9\\
&&+179
   (p-2)^8+1039 (p-2)^7+4035 (p-2)^6+12010
   (p-2)^5+27902 (p-2)^4+47020 (p-2)^3  \\
&&+51304
   (p-2)^2+31056 (p-2)+7552\big)
   (n-{5 p}/{2}-1)^7+4 \big(383
   (p-2)^{10}+5346 (p-2)^9\\
&&+35980 (p-2)^8+159526
   (p-2)^7+530549 (p-2)^6+1374600
   (p-2)^5+2697272 (p-2)^4\\
&&+3761480
   (p-2)^3+3420080 (p-2)^2+1771904
   (p-2)+384512\big) (n-{5 p}/{2}-1)^6\\
&&+4 \big(1366
   (p-2)^{11}+21955 (p-2)^{10}+168839
   (p-2)^9+844397 (p-2)^8+3123141
   (p-2)^7\\
&&+8982894 (p-2)^6+20027898
   (p-2)^5+33426974 (p-2)^4+39622784
   (p-2)^3\\
&&+30970568 (p-2)^2+14069760
   (p-2)+2752960\big) (n-{5 p}/{2}-1)^5+
   \big(16 (p-2)^8+160
   (p-2)^7\\
&&+784 (p-2)^6+2624 (p-2)^5+7040
   (p-2)^4+14720 (p-2)^3+20224 (p-2)^2+14848
   (p-2)\\
&&+4096\big) (n-{5 p}/{2}-1)^8+16 \big(15 (p-2)^9+179
   (p-2)^8+1039 (p-2)^7+4035 (p-2)^6\\
&&+12010
   (p-2)^5+27902 (p-2)^4+47020 (p-2)^3+51304
   (p-2)^2+31056 (p-2)+7552\big)\\
&&
  \times (n-{5 p}/{2}-1)^7+4 \big(383
   (p-2)^{10}+5346 (p-2)^9+35980 (p-2)^8+159526
   (p-2)^7\\
&&+530549 (p-2)^6+1374600
   (p-2)^5+2697272 (p-2)^4+3761480
   (p-2)^3+3420080 (p-2)^2\\
&&+1771904
   (p-2)+384512\big) (n-{5 p}/{2}-1)^6+4 \big(1366
   (p-2)^{11}+21955 (p-2)^{10}\\
&&+168839 (p-2)^9 +844397 (p-2)^8+3123141
   (p-2)^7+8982894 (p-2)^6+20027898
   (p-2)^5\\
&&+33426974 (p-2)^4+39622784
   (p-2)^3+30970568 (p-2)^2+14069760
   (p-2)+2752960\big) \\
&&\times(n-{5 p}/{2}-1)^5+\big(11999
   (p-2)^{12}+219174 (p-2)^{11}+1903518
   (p-2)^{10}+10634538 (p-2)^9\\
&&+43429619
   (p-2)^8+137504616 (p-2)^7+342175628
   (p-2)^6+658691852 (p-2)^5\\
&&+949147712
   (p-2)^4+977302432 (p-2)^3+671120864
   (p-2)^2+271962624 (p-2)\\
&&+48406016\big)
   (n-{5 p}/{2}-1)^4+\big(16759
   (p-2)^{13}+343985 (p-2)^{12}+3340385
   (p-2)^{11}\\
&&+20676953 (p-2)^{10}+92585532
   (p-2)^9+319829130 (p-2)^8+874746888
   (p-2)^7\\
&&+1891803264 (p-2)^6+3174682356
   (p-2)^5+4008952104 (p-2)^4+3647078460
   (p-2)^3\\
&&+2235676712 (p-2)^2+818518832
   (p-2)+133524416\big) (n-{5 p}/{2}-1)^3+\frac{1}{4} \big(58673
   (p-2)^{14}\\
&&+1339910 (p-2)^{13}+14425988
   (p-2)^{12}+98289426 (p-2)^{11}+479950435
   (p-2)^{10}\\
&&+1796016376 (p-2)^9+5330844880
   (p-2)^8+12683173064 (p-2)^7+24018443768
   (p-2)^6\\
&&+35510016400 (p-2)^5+39797188664
   (p-2)^4+32405254704 (p-2)^3+17943792656
   (p-2)^2\\
&&+5992714240 (p-2)+901550848\big)
   (n-{5 p}/{2}-1)^2+\frac{1}{4}
   \big(29684 (p-2)^{15}+747737
   (p-2)^{14}\\
  &&+8864377 (p-2)^{13}+66171015
   (p-2)^{12}+351338709 (p-2)^{11}+1419039674
   (p-2)^{10}
   \\
   &&+4533523474 (p-2)^9+11679070922
   (p-2)^8+24343088940 (p-2)^7+40677153072
   (p-2)^6
\\
&&+53456639604 (p-2)^5+53677185432
   (p-2)^4+39483510160 (p-2)^3+19914519952
   (p-2)^2\\
&&+6108577376 (p-2)+851422080\big)
   (n-{5 p}/{2}-1)+1681 (p-2)^{16}+\frac{92701}{2}
   (p-2)^{15}\\
&&+\frac{9624449}{16}
   (p-2)^{14}+\frac{39225997}{8}
   (p-2)^{13}+\frac{452561293}{16}
   (p-2)^{12}+123328536
   (p-2)^{11}
 \end{eqnarray*}  
 \begin{eqnarray*}  
&&+\frac{1692729139}{4}
   (p-2)^{10}+\frac{4682236169}{4}
   (p-2)^9+\frac{5278688557}{2}
   (p-2)^8+4850874951
   (p-2)^7\\
&&+\frac{14392593795}{2}
   (p-2)^6+8457199877 (p-2)^5+7652938727
   (p-2)^4+5112762958 (p-2)^3\\
&&+2360032842
   (p-2)^2+667419000 (p-2)+86396976,
\\
&&\boldsymbol{a_{5}(n, p)} =-4
   (2 n-p-1) \Big(8 \big((p-2)^7+10 (p-2)^6+49 (p-2)^5+155
   (p-2)^4+321 (p-2)^3\\
&&+400 (p-2)^2+260
   (p-2)+64\big) (n-{5 p}/{2}-1)^6+4 \big(21 (p-2)^8+260
   (p-2)^7+1510 (p-2)^6\\
&&+5483 (p-2)^5+13453
   (p-2)^4+21969 (p-2)^3+22276 (p-2)^2+12284
   (p-2)+2688\big)\\
&& \times (n-{5 p}/{2}-1)^5+2 \big(182 (p-2)^9+2689
   (p-2)^8+18259 (p-2)^7+76079 (p-2)^6\\
&&+215531
   (p-2)^5+425751 (p-2)^4+572467 (p-2)^3+490530
   (p-2)^2+235420 (p-2)\\
&&+46336\big)
   (n-{5 p}/{2}-1)^4+\big(818
   (p-2)^{10}+14072 (p-2)^9+110274
   (p-2)^8+525339 (p-2)^7\\
&&+1701320
   (p-2)^6+3921028 (p-2)^5+6460682
   (p-2)^4+7392461 (p-2)^3+5501018
   (p-2)^2\\
&&+2342716 (p-2)+419040\big)
   (n-{5 p}/{2}-1)^3+\frac{1}{2}
   \big(1957 (p-2)^{11}+38644 (p-2)^{10}\\
&&+346581
   (p-2)^9+1882392 (p-2)^8+6943765
   (p-2)^7+18384176 (p-2)^6+35686224
   (p-2)^5\\
&&+50498391 (p-2)^4+50433990
   (p-2)^3+33278324 (p-2)^2+12771720
   (p-2)+2095200\big)\\
&&\times  (n-{5 p}/{2}-1)^2+\frac{1}{4} \big(2281
   (p-2)^{12}+51484 (p-2)^{11}+527352
   (p-2)^{10}+3266850 (p-2)^9\\
&&+13736619
   (p-2)^8+41592491 (p-2)^7+93444338
   (p-2)^6+157129167 (p-2)^5\\
&&+195489718
   (p-2)^4+173862124 (p-2)^3+103425856
   (p-2)^2+36231808 (p-2)+5496960\big)
   \\
&& \times (n-{5 p}/{2}-1)+\frac{1}{8}
   \big(980 (p-2)^{13}+25433 (p-2)^{12}+298535
   (p-2)^{11}+2115318 (p-2)^{10}\\
&&+10166455
   (p-2)^9+35229193 (p-2)^8+91083041
   (p-2)^7+178648663 (p-2)^6\\
&&+266188790
   (p-2)^5+296805300 (p-2)^4+238960192
   (p-2)^3+129940440 (p-2)^2\\
&&+42015120
   (p-2)+5942016\big)\bigg),
\\
&&\boldsymbol{a_{6}(n, p)} =2 (2 n-p-1)^2 \Big(\big(8 (p-2)^5+56
   (p-2)^4+192 (p-2)^3+416 (p-2)^2+512
   (p-2)\\
&&+256\big) (n-{5 p}/{2}-1)^5+4 \big(19 (p-2)^6+177
   (p-2)^5+754 (p-2)^4+1926 (p-2)^3\\
&&+3090
   (p-2)^2+2848 (p-2)+1128\big)
   (n-{5 p}/{2}-1)^4+2 \big(135 p^7-325 p^6+699 p^5-299 p^4\\
&&+288
   p^3-210 p^2+52 p-16\big) (n-{5 p}/{2}-1)^3\big(421
   (p-2)^8+5907 (p-2)^7+37027 (p-2)^6\\
&&+136825
   (p-2)^5+329646 (p-2)^4+534610 (p-2)^3+570856
   (p-2)^2+364584 (p-2)\\
&&+105320\big)
   (n-{5 p}/{2}-1)^2+\frac{1}{2}
   \big(524 (p-2)^9+8904 (p-2)^8+67489
   (p-2)^7+301841 (p-2)^6\\
&&+885632 (p-2)^5+1784478
   (p-2)^4+2488660 (p-2)^3+2325156
   (p-2)^2+1317904 (p-2)\\
&&+342720\big)
   (n-{5 p}/{2}-1)+\frac{1}{4}
   \big(172 (p-2)^{10}+3752 (p-2)^9+35611
   (p-2)^8+197237 (p-2)^7\\
&&+715048 (p-2)^6+1792730
   (p-2)^5+3180636 (p-2)^4+3978628
   (p-2)^3+3377736 (p-2)^2\\
&&+1759056
   (p-2)+424800\big)\Big),
\\
&&
  \boldsymbol{a_{7}(n, p)} =   -4 (2 n-p-1)^3 \big(2 p (n-2 p)+p^2-p+2\big)\Big(\big(2 (p-2)^3+10 (p-2)^2+14
   (p-2)+4\big) \\   &&\times(n-{5 p}/{2}-1)^2+\big(6 (p-2)^4+41
   (p-2)^3+99 (p-2)^2+92 (p-2)+18\big)
   (n-{5 p}/{2}-1)\\
&&+\frac{5}{2}
   (p-2)^5+24 (p-2)^4+90 (p-2)^3+154 (p-2)^2+104
   (p-2)+6\Big),  
\\
&&
  a_{8}(n, p) =(2 n-p-1)^4 \left(2 p (n-2 p)+p^2-p+2\right)^2. 
    \end{eqnarray*}  }
   Thus we see that, if the equation $ H_{n, p}(u_0) =0$ has  real solutions,  then these are  positive for $ \displaystyle 2 \leq p \leq \frac{2}{5} n -1$.

Now we take the lexicographic order $>$ with $z>u_0>u_2>u_1$ for a monomial ordering on $R$.  Then, by the aid of computer, we see that a
 Gr\"obner basis for the ideal $J$ contains the polynomial $(u_1-1)F_{n, p}(u_1)$, where 
{\small \begin{eqnarray*}
&&\boldsymbol{F_{n, p}(u_1)}= (2 n-p-1)^4 (p-2)^2 (p+1)^2 {u_ 1}^8\\
& &
-2 (2 n-p-1)^3 (p-2) (p+1) \left(-10
   p^3+n p^2+25 p^2-20 p+4 n\right) {u_ 1}^7\\
   &&
   -2 (2 n-p-1) \big(-182 p^7-83 n
   p^6+1407 p^6+505 n^2 p^5-1587 n p^5-2570 p^5-284 n^3 p^4\\
   &&-603 n^2 p^4+5381 n
   p^4+504 p^4+36 n^4 p^3+710 n^3 p^3-1440 n^2 p^3-4595 n p^3+1377 p^3-64 n^4
   p^2\\
   &&-616 n^3 p^2+2768 n^2 p^2-250 n p^2+202 p^2+32 n^4 p+48 n^3 p-688 n^2
   p+344 n p-480 p+32 n^4
 \\  &&-112 n^3
   +112 n^2+64 n+32\big) {u_ 1}^6
 +(2 n-p-1)^2 \big(140 p^6-10 n
   p^5-726 p^5-99 n^2 p^4+414 n p^4\\
   &&+1057 p^4+44 n^3 p^3+68 n^2 p^3   -930 n p^3-178
   p^3-112 n^3 p^2+312 n^2 p^2+420 n p^2-596 p^2\\ &&+80 n^3 p-208 n^2 p-280 n p+816
   p-64 n^3+272 n^2-304 n-64\big) {u_ 1}^5+\big(46
   p^8+1262 n p^7
 \\ &&
   -2706 p^7-2685 n^2 p^6+2570 n p^6+7173 p^6+828 n^3 p^5+10434
   n^2 p^5-31616 n p^5+5706 p^5\\ &&
   +1996 n^4 p^4-20068 n^3 p^4+34511 n^2 p^4+15458 n
   p^4-15743 p^4-1728 n^5 p^3+9248 n^4 p^3\\ &&
   +7624 n^3 p^3-75052 n^2 p^3+63050 n
   p^3-14418 p^3+384 n^6 p^2+864 n^5 p^2-22832 n^4 p^2\\ &&+65888 n^3 p^2-54384 n^2
   p^2+16444 n p^2-1358 p^2-896 n^6 p+6336 n^5 p-12288 n^4 p+2384 n^3 p\\ &&
   +7616 n^2
   p-6256 n p+1784 p+256 n^6-2048 n^5+5632 n^4-6624 n^3+4336 n^2-1632
   n+248\big) {u_ 1}^4\\ &&
   -2 \big(-330 p^8+137 n p^7+2129 p^7+852 n^2 p^6-4328
   n p^6-2144 p^6-1120 n^3 p^5+2522 n^2 p^5\\ &&
   +6676 n p^5-2146 p^5+704 n^4 p^4-1288
   n^3 p^4-2508 n^2 p^4-4002 n p^4+5788 p^4-304 n^5 p^3\\ &&
   +1096 n^4 p^3-2600 n^3
   p^3+10706 n^2 p^3-14817 n p^3+3701 p^3+64 n^6 p^2-224 n^5 p^2+1104 n^4
   p^2\\ &&
   -6616 n^3 p^2+13188 n^2 p^2-6654 n p^2+1114 p^2-64 n^6 p-128 n^5 p+3008
   n^4 p-8600 n^3 p\\ &&
   +8552 n^2 p-4428 n p+1000 p+128 n^6-1024 n^5+2944 n^4-3952
   n^3+3024 n^2-1288 n+256\big) {u_ 1}^3\\ &&
   +\big(444 p^8-782 n p^7-1226
   p^7+363 n^2 p^6+2566 n p^6-121 p^6+232 n^3 p^5-2758 n^2 p^5+708 n p^5\\ &&
   +1854
   p^5-292 n^4 p^4+1524 n^3 p^4+447 n^2 p^4-5066 n p^4+439 p^4+80 n^5 p^3-160
   n^4 p^3-1948 n^3 p^3\\ &&
   +5916 n^2 p^3-1638 n p^3-646 p^3-96 n^5 p^2+896 n^4
   p^2-1760 n^3 p^2-1512 n^2 p^2+4028 n p^2\\ &&
   -1020 p^2-320 n^4 p+1792 n^3 p-2768
   n^2 p+600 n p+80 p-224 n^3+896 n^2-848 n+320\big) {u_ 1}^2
  \\ &&
   +2 \big(3 p^2-2 n p+p-2\big) \big(18 p^6-35 n p^5-5 p^5+31 n^2 p^4-17 n p^4-10
   p^4-16 n^3 p^3+37 n^2 p^3 \\
    &&
   -25 n p^3+62 p^3+4 n^4 p^2-18 n^3 p^2+26 n^2 p^2-53
   n p^2+9 p^2+4 n^3 p-16 n^2 p+44 n p-56 p+16\big) {u_ 1}\\ &&
   +\big(p^2-n p+p-1\big)^2 \left(3 p^2-2 n p+p-2\right)^2. 
     \end{eqnarray*}   }     
If $u_1=1$ we obtain Jensen's Einstein metrics.
      Moreover, the Gr\"obner basis contains polynomials of the form 
 $b(n, p) u_0 - X(u_1)$ and $c(n, p) u_2 - Y(u_1)$, where 
 $X(u_1)$ and $Y(u_1)$ are some polynomials of degree 7 with coefficients in $\bb{Q}[n, p]$, and $b(n, p)$ and $c(n, p)$ are in $\bb{Q}[n, p]$. In fact, 
 {\small
  \begin{eqnarray*}
&&
\boldsymbol{b(n, p)}=  -32 \left(2  p( n -2 p) + p^2-p+2\right)^2\\ && \times\Big(\big(1600 (p-3)^6+22400 (p-3)^5+126400 (p-3)^4+364800 (p-3)^3+561600 (p-3)^2\\
&&+432000 (p-3)+129600\big) (n-2 p)^{14}+\big(24992 (p-3)^7+397120 (p-3)^6+2639648 (p-3)^5\\
&&+9476032 (p-3)^4+19750624 (p-3)^3+23787712 (p-3)^2+15280224 (p-3)+4051008\big)\\
&&\times (n-2 p)^{13}+\big(172336 (p-3)^8+3060400 (p-3)^7+23345632 (p-3)^6+99714576 (p-3)^5\\
&&+260277216 (p-3)^4+424326352 (p-3)^3+421471968
   (p-3)^2+233456240 (p-3)
        \end{eqnarray*} 
 \begin{eqnarray*}  
 &&+55552560\big) (n-2 p)^{12}+\big(689538 (p-3)^9+13512892 (p-3)^8+116029868 (p-3)^7
\\
&&+572333580
   (p-3)^6+1785443216 (p-3)^5+3650392180 (p-3)^4+4891103124 (p-3)^3\\
&&+4146940228 (p-3)^2+2026691246 (p-3)+438563968\big) (n-2
   p)^{11}+\big(1773104 (p-3)^{10}\\
&&+37938152 (p-3)^9+361157714 (p-3)^8+2013129174 (p-3)^7+7272879190 (p-3)^6\\
&&+17791197494
   (p-3)^5+29857709290 (p-3)^4+33995657362 (p-3)^3+25214056658 (p-3)^2\\
&&+11064354170 (p-3)+2201283692\big) (n-2 p)^{10}+\big(3078144
   (p-3)^{11}+71263024 (p-3)^{10}
\\
&&+743009164 (p-3)^9+4603448388 (p-3)^8+18827485508 (p-3)^7+53377113932 (p-3)^6\\
&&+107111336372
   (p-3)^5+152376423724 (p-3)^4+151042751404 (p-3)^3+99828711028 (p-3)^2\\
   &&+39865401488 (p-3)+7349429344\big) (n-2 p)^9+\big(3717188
   (p-3)^{12}+92477036 (p-3)^{11}\\
   &&+1046494337 (p-3)^{10}+7120830760 (p-3)^9+32445195504 (p-3)^8+104306889892 (p-3)^7\\
&&+242779969176
   (p-3)^6+412828294298 (p-3)^5+510279831304 (p-3)^4+448872403836 (p-3)^3\\
&&+268133641731 (p-3)^2+98253767954 (p-3)+16796669704\big) (n-2
   p)^8+\big(3217600 (p-3)^{13}\\
&&+85779040 (p-3)^{12}+1049391440 (p-3)^{11}+7798553312 (p-3)^{10}+39276103438 (p-3)^9\\
&&+141588772948
   (p-3)^8+376103441332 (p-3)^7+746204224328 (p-3)^6+1107844903214 (p-3)^5\\
&&+1218694176544 (p-3)^4+968572561004 (p-3)^3+528301042196
   (p-3)^2+177608181876 (p-3)\\
&&+27774685648\big) (n-2 p)^7+\big(2085040 (p-3)^{14}+59814784 (p-3)^{13}+794400772 (p-3)^{12}\\
&&+6473317332
   (p-3)^{11}+36158180946 (p-3)^{10}+146483415104 (p-3)^9+444034228432 (p-3)^8\\
&&+1023797180622 (p-3)^7+1805798361308 (p-3)^6+2426997252450
   (p-3)^5+2448365609082 (p-3)^4\\
&&+1797632672456 (p-3)^3+906613659852 (p-3)^2+279936682548 (p-3)+39584942472\big) (n-2 p)^6\\
&&+\big(1057152 (p-3)^{15}+32936192 (p-3)^{14}+479344544 (p-3)^{13}+4322774160 (p-3)^{12}\\
&&+27014619096 (p-3)^{11}+123933631612 (p-3)^{10}+431229952100
   (p-3)^9+1159006157180 (p-3)^8\\
&&+2426151768832 (p-3)^7+3955312197368 (p-3)^6+4979245956600 (p-3)^5+4749414262148 (p-3)^4\\
&&+3317658181124
   (p-3)^3+1598429426380 (p-3)^2+473308267496 (p-3)+64634577456\big) (n-2 p)^5\\
&&+\big(415520 (p-3)^{16}+14113984 (p-3)^{15}+225708864
   (p-3)^{14}+2255397056 (p-3)^{13}\\
&&+15759395378 (p-3)^{12}+81638285772 (p-3)^{11}+324297094399 (p-3)^{10}+1007589196068
   (p-3)^9\\
&&+2474476711162 (p-3)^8+4819249988316 (p-3)^7+7418923490891 (p-3)^6+8933651083982 (p-3)^5\\
&&+8251575670178 (p-3)^4+5654842061140
   (p-3)^3+2714716062858 (p-3)^2\\
&&+817342571780 (p-3)+116662053632\big) (n-2 p)^4+\big(116576 (p-3)^{17}+4298624 (p-3)^{16}
\\
&&
+75130192
   (p-3)^{15}+826412720 (p-3)^{14}+6406269046 (p-3)^{13}+37134701924 (p-3)^{12}
\\   
&&+166662234436 (p-3)^{11}+591546075680
   (p-3)^{10}+1681266530110 (p-3)^9+3849160646768 (p-3)^8
\\
&&+7101622506196 (p-3)^7+10505119842624 (p-3)^6+12317098218390
   (p-3)^5\\
&&+11218811444660 (p-3)^4+7676477338608 (p-3)^3+3724468474664 (p-3)^2\\
&&+1146181916270 (p-3)+168794152112\big) (n-2 p)^3+\big(21120
   (p-3)^{18}+842304 (p-3)^{17} \\
&&+16026624 (p-3)^{16}+193240064 (p-3)^{15}+1654078424 (p-3)^{14}+10670996900 (p-3)^{13}\\
&&+53761273072
   (p-3)^{12}+216249222138 (p-3)^{11}+703992041434 (p-3)^{10}+1868772806378 (p-3)^9\\
&&+4055046321312 (p-3)^8+7176809956664
   (p-3)^7+10285658097412 (p-3)^6\\
&&+11778667276216 (p-3)^5+10544080113596 (p-3)^4+7122124018534 (p-3)^3\\
&&+3419349157538 (p-3)^2+1041531393170
   (p-3)+151507442700\big) (n-2 p)^2+\big(2176 (p-3)^{19}\\
&&+93568 (p-3)^{18}+1929920 (p-3)^{17}+25362816 (p-3)^{16}+237944008
   (p-3)^{15}+1692175912 (p-3)^{14}\\
&&+9455173680 (p-3)^{13}+42456694960 (p-3)^{12}+155403427848 (p-3)^{11}+467578535064
   (p-3)^{10}
        \end{eqnarray*} 
 \begin{eqnarray*}  
&&+1160829638848 (p-3)^9+2377230479528 (p-3)^8+3998145895696 (p-3)^7+5472923178272 (p-3)^6\\
&&+6007447014744 (p-3)^5+5166301657560
   (p-3)^4+3355990390400 (p-3)^3+1549560080000 (p-3)^2  \\
&&+453536215000 (p-3)+63289950000\big) (n-2 p)+64 (p-3)^{20}+2880 (p-3)^{19}+62448 (p-3)^{18}\\
&&+866592 (p-3)^{17}+8623068 (p-3)^{16}+65342356 (p-3)^{15}+390920801 (p-3)^{14}+1889383324
   (p-3)^{13}\\
&&+7487294598 (p-3)^{12}+24552266432 (p-3)^{11}+66947671665 (p-3)^{10}+151978346834 (p-3)^9\\
&& + 286581722768 (p-3)^8+446227977020
   (p-3)^7+567858927275 (p-3)^6+581249565000 (p-3)^5\\
&&+467104073750 (p-3)^4+283883000000 (p-3)^3+122677171875 (p-3)^2+33587656250
   (p-3)\\
&&+4378125000\Big),
 \end{eqnarray*} 
 }
for $p \geq 3$ and for $p=2$ it is 
{\small \begin{eqnarray*} &&
\boldsymbol{b(n, 2)}= -8192 (n-3)^2 (n-2)^4\big(1024 (n-5)^9+17408 (n-5)^8+123904 (n-5)^7+475968 (n-5)^6
\\&&
+1057600 (n-5)^5
+1350064 (n-5)^4+943152 (n-5)^3+411260 (n-5)^2
+263700 (n-5)+135409\big).
 \end{eqnarray*} }
 Also, 
 {\small
  \begin{eqnarray*}
&&
\boldsymbol{c(n, p)}=  64 (n-2) \left(5 (p-2)^3+22 (p-2)^2+29 (p-2)+8\right) (p-1) (n-2 p) \left(2 p (n-2 p)+p^2-p+2\right) 
\\
&&
\Big( \big(1600 (p-3)^6+22400 (p-3)^5+126400
   (p-3)^4+364800 (p-3)^3+561600 (p-3)^2\\
&&
+432000 (p-3)+129600\big) (n-2 p)^{14}+\big(24992 (p-3)^7+397120 (p-3)^6+2639648
   (p-3)^5\\
&&+9476032 (p-3)^4+19750624 (p-3)^3+23787712 (p-3)^2+15280224 (p-3)+4051008\big) (n-2 p)^{13}\\
&&+\big(172336
   (p-3)^8+3060400 (p-3)^7+23345632 (p-3)^6+99714576 (p-3)^5+260277216 (p-3)^4\\
&&
+424326352 (p-3)^3+421471968
   (p-3)^2+233456240 (p-3)+55552560\big) (n-2 p)^{12}+\big(689538 (p-3)^9\\
&&
+13512892 (p-3)^8+116029868 (p-3)^7+572333580
   (p-3)^6+1785443216 (p-3)^5
 \\&&
+3650392180 (p-3)^4+4891103124 (p-3)^3+4146940228 (p-3)^2+2026691246 (p-3)+438563968\big)\\
&&
\times 
   (n-2 p)^{11}+\big(1773104 (p-3)^{10}+37938152 (p-3)^9+361157714 (p-3)^8+2013129174 (p-3)^7\\
&&
+7272879190
   (p-3)^6+17791197494 (p-3)^5+29857709290 (p-3)^4+33995657362 (p-3)^3\\
&&
+25214056658 (p-3)^2+11064354170
   (p-3)+2201283692\big) (n-2 p)^{10}+\big(3078144 (p-3)^{11}\\
&&
+71263024 (p-3)^{10}+743009164 (p-3)^9+4603448388
   (p-3)^8+18827485508 (p-3)^7  \\&&
+53377113932 (p-3)^6+107111336372 (p-3)^5+152376423724 (p-3)^4+151042751404
   (p-3)^3  \\
&&
+99828711028 (p-3)^2+39865401488 (p-3)+7349429344\big) (n-2 p)^9+\big(3717188 (p-3)^{12}
\\
&&
+92477036
   (p-3)^{11}+1046494337 (p-3)^{10}+7120830760 (p-3)^9+32445195504 (p-3)^8
 \\&&
+104306889892 (p-3)^7+242779969176
   (p-3)^6+412828294298 (p-3)^5+510279831304 (p-3)^4
\\ 
&&
+448872403836 (p-3)^3+268133641731 (p-3)^2+98253767954
   (p-3)+16796669704\big) (n-2 p)^8\\
&&
+\big(3217600 (p-3)^{13}+85779040 (p-3)^{12}+1049391440 (p-3)^{11}+7798553312
   (p-3)^{10} \\
&&
+39276103438 (p-3)^9+141588772948 (p-3)^8+376103441332 (p-3)^7+746204224328 (p-3)^6\\
&&
+1107844903214
   (p-3)^5+1218694176544 (p-3)^4+968572561004 (p-3)^3+528301042196 (p-3)^2\\
&&
+177608181876 (p-3)+27774685648\big) (n-2 p)^7+\big(2085040 (p-3)^{14}+59814784 (p-3)^{13}  
\\
&&
+794400772 (p-3)^{12}+6473317332 (p-3)^{11}+36158180946
   (p-3)^{10}+146483415104 (p-3)^9\\
&&
+444034228432 (p-3)^8+1023797180622 (p-3)^7+1805798361308 (p-3)^6+2426997252450
   (p-3)^5\\
&&
+2448365609082 (p-3)^4+1797632672456 (p-3)^3+906613659852 (p-3)^2+279936682548 (p-3)\\
&&
+39584942472\big) (n-2
   p)^6+\big(1057152 (p-3)^{15}+32936192 (p-3)^{14}+479344544 (p-3)^{13} \\
&&
+4322774160 (p-3)^{12}+27014619096
   (p-3)^{11}+123933631612 (p-3)^{10}+431229952100 (p-3)^9
      \end{eqnarray*}  
  \begin{eqnarray*} 
&&
+1159006157180 (p-3)^8+2426151768832 (p-3)^7+3955312197368
   (p-3)^6+4979245956600 (p-3)^5\\
&&
+4749414262148 (p-3)^4+3317658181124 (p-3)^3+1598429426380 (p-3)^2+473308267496
   (p-3)\\
&&
+64634577456\big) (n-2 p)^5+\big(415520 (p-3)^{16}+14113984 (p-3)^{15}+225708864 (p-3)^{14}\\
&&
+2255397056
   (p-3)^{13}+15759395378 (p-3)^{12}+81638285772 (p-3)^{11}+324297094399 (p-3)^{10}\\
&&
+1007589196068 (p-3)^9+2474476711162
   (p-3)^8+4819249988316 (p-3)^7
+7418923490891 (p-3)^6\\
&&
+8933651083982 (p-3)^5+8251575670178 (p-3)^4+5654842061140
   (p-3)^3+2714716062858 (p-3)^2\\
&&
+817342571780 (p-3)+116662053632\big) (n-2 p)^4+\big(116576 (p-3)^{17}+4298624
   (p-3)^{16}\\
&&
+75130192 (p-3)^{15}+826412720 (p-3)^{14}+6406269046 (p-3)^{13}+37134701924 (p-3)^{12}\\
&&
+166662234436
   (p-3)^{11}+591546075680 (p-3)^{10}+1681266530110 (p-3)^9+3849160646768 (p-3)^8\\
&&
+7101622506196 (p-3)^7+10505119842624
   (p-3)^6+12317098218390 (p-3)^5\\
&&
+11218811444660 (p-3)^4+7676477338608 (p-3)^3+3724468474664 (p-3)^2+1146181916270
   (p-3)\\
&&
+168794152112\big) (n-2 p)^3+\big(21120 (p-3)^{18}+842304 (p-3)^{17}+16026624 (p-3)^{16}\\
&&
+193240064
   (p-3)^{15}+1654078424 (p-3)^{14}+10670996900 (p-3)^{13}+53761273072 (p-3)^{12}\\
&&
+216249222138 (p-3)^{11}+703992041434
   (p-3)^{10}+1868772806378 (p-3)^9+4055046321312 (p-3)^8\\
&&
+7176809956664 (p-3)^7+10285658097412 (p-3)^6+11778667276216
   (p-3)^5\\
&&
+10544080113596 (p-3)^4+7122124018534 (p-3)^3+3419349157538 (p-3)^2+1041531393170 (p-3)\\
&&
+151507442700\big) (n-2
   p)^2+\big(2176 (p-3)^{19}+93568 (p-3)^{18}+1929920 (p-3)^{17}+25362816 (p-3)^{16}\\
&&
+237944008 (p-3)^{15}+1692175912
   (p-3)^{14}+9455173680 (p-3)^{13}+42456694960 (p-3)^{12}+\\
&&
155403427848 (p-3)^{11}+467578535064 (p-3)^{10}+1160829638848
   (p-3)^9+2377230479528 (p-3)^8\\
&&
+3998145895696 (p-3)^7+5472923178272 (p-3)^6+6007447014744 (p-3)^5+5166301657560
   (p-3)^4\\
&&
+3355990390400 (p-3)^3+1549560080000 (p-3)^2+453536215000 (p-3)+63289950000\big) (n-2 p)   \\
&&
+64 (p-3)^{20}+2880 (p-3)^{19}+62448 (p-3)^{18}+866592 (p-3)^{17}+8623068 (p-3)^{16}  \\
&&
+65342356 (p-3)^{15}+390920801
   (p-3)^{14}+1889383324 (p-3)^{13}+7487294598 (p-3)^{12}\\
&&
+24552266432 (p-3)^{11}+66947671665 (p-3)^{10}+151978346834
   (p-3)^9+286581722768 (p-3)^8\\
&&
+446227977020 (p-3)^7+567858927275 (p-3)^6+581249565000 (p-3)^5+467104073750
   (p-3)^4\\
&&
+283883000000 (p-3)^3+122677171875 (p-3)^2+33587656250 (p-3)+4378125000
    \end{eqnarray*} }
    for $p \geq 3$ and for $p=2$ it is 
{\small \begin{eqnarray*} &&
\boldsymbol{c(n, 2)}= 32768 (n-4) (n-3) (n-2)^5
\big(1024 (n-5)^9+17408 (n-5)^8+123904 (n-5)^7\\&&+475968 (n-5)^6+1057600 (n-5)^5
+1350064 (n-5)^4+943152 (n-5)^3+411260
   (n-5)^2\\&&+263700 (n-5)+135409\big).    
    \end{eqnarray*} }
    
Thus we see that   $b(n, p)$ and $c(n, p)$  are  non zero  for $n- 2p > 0$ and $p \geq 2$. 
  In particular, if $u_1$ are reals, so are the solutions $ u_2 $ and  $u_0$.
     
     Now see that 
 {\small \begin{eqnarray*}
&&\boldsymbol{F_{n, p}(1)}=256 n^6 p^2-768 n^6 p-1024 n^5 p^3+1024 n^5 p^2+6784 n^5 p-384 n^5+1024 n^4 p^4
\\&&
   +4608 n^4 p^3   -20480 n^4 p^2-19456 n^4 p+1792 n^4+1024 n^3 p^5-11776 n^3 p^4+12032 n^3 p^3
   \\&&+66944 n^3 p^2
   +21376 n^3 p-2304 n^3-2048 n^2 p^6+5120 n^2
   p^5+21248 n^2 p^4-69120 n^2 p^3 \\
   &&-79552 n^2 p^2-7296 n^2 p   +576 n^2+7168 n p^6-28160 n p^5+13568 n p^4+79232 n p^3+31488 n p^2 \\
&&
   +1152 n p+1024 p^8 -6144 p^7+8704 p^6+9216 p^5-22976 p^4-20352 p^3-4032 p^2
     \end{eqnarray*}  
  \begin{eqnarray*}    &&
 = \big(256 (p-3)^2+768 (p-3)\big) (n-{(5 (p+1)})/{2})^6+\big(2816
   (p-3)^3+18688 (p-3)^2+31360 (p-3)\\&& +1536\big) (n-{(5 (p+1)})/{2})^5+
   \big(12224 (p-3)^4+127296 (p-3)^3+442688 (p-3)^2+527872
   (p-3)
\\   &&
   +43264\big) (n-{(5 (p+1)})/{2})^4+\big(27264
   (p-3)^5+389504 (p-3)^4+2081600 (p-3)^3\\ &&
   +4978048 (p-3)^2+4670336
   (p-3)+478720\big) (n-{(5 (p+1)})/{2})^3+\big(34032
   (p-3)^6\\ &&
   +616656 (p-3)^5+4454688 (p-3)^4+16104896 (p-3)^3+29472000
   (p-3)^2+22949632 (p-3)\\ && 
   +2631936\big) (n-{(5 (p+1)})/{2})^2+\big(22960 (p-3)^7+502288 (p-3)^6+4559624
   (p-3)^5\\&&
   +22039488 (p-3)^4+60136288 (p-3)^3+89046528 (p-3)^2+59364352
   (p-3)\\ &&
   +7186432\big) (n-{(5 (p+1)})/{2})+6724 (p-3)^8+171052
   (p-3)^7+1856868 (p-3)^6\\ && 
   +11166320 (p-3)^5+40284304 (p-3)^4+87708288
   (p-3)^3+108449792 (p-3)^2\\ && +62996480 (p-3)+7770112  > 0
   \end{eqnarray*} }
   for $ \displaystyle 3 \leq p \leq \frac{2}{5}n -1$. 
 
 \medskip  
     Note that for $p =2$ we have 
 {\small     \begin{eqnarray*}&&
      \boldsymbol{F_{n, 2}(2 n)} = 65536 (n-7)^9+3645440 (n-7)^8+89782272 (n-7)^7+1285090304 (n-7)^6 \\
      &&
      +11781647616
   (n-7)^5+71750921344 (n-7)^4+290281447040 (n-7)^3\\ &&+752322349200
   (n-7)^2+1133434256000 (n-7)+756320640000 > 0.
   \end{eqnarray*} }
We also see that 
{ \small \begin{eqnarray*}
&&\boldsymbol{F_{n, p}}\Big(\frac{1}{4}\Big)= -\big(\left(32768 p^2+98304 p+196608\right) (n-{(5 (p+1)})/{2})^6+256
   \big(985 p^3+6524 p^2\\ && 
   +9932 p+5424\big) (n-{(5 (p+1)})/{2})^5+16 \big(27175 p^4+504198 p^3+1094251 p^2+855884
   p\\ && +216092\big) (n-{(5 (p+1)})/{2})^4+32 \big(13929 p^5+377600
   p^4+1776269 p^3+2029438 p^2
\\ &&
   +894812 p+119752\big) (n-{(5 (p+1)})/{2})^3+16 \big(20277 p^6+517836 p^5+3876897 p^4+9354262
   p^3  \\ &&
   +6363072 p^2+1645640 p+140016\big) (n-{(5 (p+1)})/{2})^2+64
   \big(1709 p^7+41993 p^6+409393 p^5  \\ &&
   +1646739 p^4+2428106 p^3+1038900
   p^2+179240 p+10880\big) (n-{(5 (p+1)})/{2})
   +64 \big(101 p^8  \\  
   &&+4130 p^7+52794 p^6+312052 p^5+802181 p^4+741498 p^3+231340 p^2+30160
   p+1424\big)
\big) < 0 
  \end{eqnarray*} }
   for $ \displaystyle 2 \leq p \leq \frac{2}{5}n -1$ and 
   $$F_{n, p} (0 ) = \big(p^2-n p+p-1\big)^2 \left(3 p^2-2 n p+p-2\right)^2 > 0. $$ 
   
  Thus we obtain at least two positive solutions for the equation $F_{n, p} (u_1 )  =0$ for $ \displaystyle 2 \leq p \leq \frac{2}{5}n -1$. \end{proof}
  
   \begin{remark}
  For $n = 31$ we have $2 \leq p \leq  2 n/5 -1 = 62/5 -1 = 11.4$, but for $p =12, 13$ we see that, except Jensen's Einstein metrics, for 
 $V_{26}\mathbb{R}^{31} = \SO(31)/\SO(5)$, there  are no $\Ad(\U(13)\times \SO(5)$-invariant Einstein metrics. However,   
 for 
 $V_{24}\mathbb{R}^{31} = \SO(31)/\SO(7)$, there  are two more $\Ad(\U(12)\times \SO(7)$-invariant Einstein metrics.
 \end{remark}
In fact, it is  
  \begin{eqnarray*}
&&  F_{31, 13}(u_1)= 491774976 {u_1}^8+1682093952
   {u_1}^7+4011833808 {u_1}^6+2082493764
   {u_1}^5 \\
   &&+1342556360 {u_1}^4-2795832361
   {u_1}^3+1093464243 {u_1}^2-193555008
   {u_1}+15968016
     \end{eqnarray*}
     and
    \begin{eqnarray*}
&&  F_{31, 12}(u_1)= 24356284225
   {u_1}^8+71363530420 {u_1}^7+235478881736 {u_1}^6+125628595904 {u_1}^5
 \\ & &   +221500487082 {u_1}^4-235075487612 {u_1}^3+
   75786327156 {u_1}^2-12840182320 {u_1}+1073676289. 
     \end{eqnarray*}
   We see that $F_{31, 13}(u_1)=0$ has no real roots, but  $F_{31, 12}(u_1)=0$ has two positive roots.

Finally, we observe that the argument to show that, if the equations $ H_{n, p}(u_0) =0$ and $G_{n, p}(u_2)=0$ have  real solutions  then these are  positive, does not work for  $F_{n, p}(u_1) = 0$.  This is because  $F_{n, p}(u_1) $ has positive coefficients in odd degrees.

\end{document}